\documentclass[letterpaper]{article}
\usepackage{uai20202}
\usepackage[margin=1in]{geometry}

\usepackage{multicol}
\usepackage{multirow}
\usepackage{tabularx}

\usepackage{times}
\usepackage{algpseudocode}
\usepackage{lipsum}
\usepackage{float}
%


\usepackage{amsmath, amsthm, amssymb , bbm}
\newtheorem{theorem}{Theorem}[section]
\newtheorem{corollary}{Corollary}[theorem]
\newtheorem{lemma}[theorem]{Lemma}
\newtheorem{proposition}[theorem]{Proposition}
\theoremstyle{definition}
\newtheorem{definition}{Definition}[section]
\newtheorem{remark}{Remark}
\newtheorem*{lemma35}{Lemma 3.5}
\newtheorem*{lemma37}{Lemma 3.7}

\newtheorem*{theorem38}{Theorem 3.8}
\newtheorem*{corollary381}{Corollary 3.8.1}

\newtheorem*{proposition41}{Proposition 4.1}
\newtheorem*{proposition42}{Proposition 4.2}

\usepackage[toc,page]{appendix}
\usepackage[round]{natbib}
\usepackage{mathtools, nccmath}
\DeclareMathOperator{\head}{head}
\DeclareMathOperator{\tail}{tail}
\DeclareMathOperator{\pa}{pa}
\DeclareMathOperator{\sib}{sib}
\DeclareMathOperator{\an}{an}
\DeclareMathOperator{\de}{de}
\DeclareMathOperator{\dis}{dis}
\DeclareMathOperator{\barren}{barren}
\DeclareMathOperator{\ant}{ant}

\DeclarePairedDelimiter{\abs}{\lvert}{\rvert}
\newcommand{\lqarrow}{\mathbin{\leftarrow\!\!\medmath{?}}}
\newcommand{\rqarrow}{\mathbin{\medmath{?}\!\!\rightarrow}}

\usepackage[linesnumbered]{algorithm2e}

\newcommand\indep{\protect\mathpalette{\protect\independenT}{\perp}}
\def\independenT#1#2{\mathrel{\rlap{$#1#2$}\mkern2mu{#1#2}}}

\usepackage{tikz}
\usetikzlibrary{shapes, arrows}
\usepackage{graphicx}
\usepackage{multirow}





\title{Faster algorithms for Markov equivalence}

\author{ {\bf Zhongyi Hu} \\
Department of Statistics \\
University of Oxford\\
zhongyi.hu@keble.ox.ac.uk\\
\And
{\bf Robin Evans}  \\
Department of Statistics          \\
University of Oxford \\
evans@stats.ox.ac.uk
}
\begin{document}
\maketitle

\begin{abstract}
   Maximal ancestral graphs (MAGs) have many desirable properties; in particular they can fully describe conditional independences from directed acyclic graphs (DAGs) in the presence of latent and selection variables. However, different MAGs may encode the same conditional independences, and are said to be \emph{Markov equivalent}. Thus identifying necessary and sufficient conditions for equivalence is essential for structure learning. Several criteria for this already exist, but in this paper we give a new non-parametric characterization in terms of the heads and tails that arise in the parameterization for discrete models. We also provide a polynomial time algorithm ($O(ne^{2})$, where $n$ and $e$ are the number of vertices and edges respectively) to verify equivalence.  Moreover, we extend our criterion to ADMGs and summary graphs and propose an algorithm that converts an ADMG or summary graph to an equivalent MAG in polynomial time ($O(n^{2}e)$).  Hence by combining both algorithms, we can also verify equivalence between two summary graphs or ADMGs.
\end{abstract}


\section{INTRODUCTION}
DAG models, also known as Bayesian networks, are popular graphical models that associate a probability distribution $P(X_{V})$ with a graph consisting of vertices representing random variables $X_{V}$ joined by  directed edges. In the context of causal inference, a directed edge $a \rightarrow b$ can be interpreted as `$a$ has a direct causal effect on $b$'. A DAG encodes conditional independence in $P$ by a criterion called d-separation \citep{pearl2009causality}. For example, $1 \rightarrow 2 \leftarrow 3$ is a DAG with vertices $1,2,3$ and implies one independence: $X_1 \indep X_3$.  DAGs are also associated with an elegant factorization of probability distributions, which allows fast statistical inference and fitting. With some additional assumptions they can be used for causal modelling, and thus they are used in many fields such as expert systems, pattern recognition in machine learning, or estimating causal effects in experimental science. 
\par
An interesting question is how to learn unknown DAGs from a dataset. \citet{spirtes2000causation} provide an algorithm called the PC algorithm; this learns the underlying DAG by testing conditional independences inherited in the data. However, when latent variables are present, conditional independence in the observed variables may imply the wrong underlying causal structure, or even not correspond to any DAGs at all. For example, in Figure \ref{LDAG}(i) with latent variable $h$ (this is an example from \citet{richardson2002}), there is no DAG that describes precisely the independence on the marginal. We say, then, that DAGs are not closed under  marginalization. Classes of supermodels have been developed to tackle this problem, one of which is \emph{maximal ancestral graphs} (MAGs) introduced by \citet{richardson2002}.  This includes graphs with additional types of edges: bidirected edges ($\leftrightarrow$) and undirected  edges. A bidirected edge $1 \leftrightarrow 2$ can be interpreted as saying that there is a latent variable $h$ such that $1 \leftarrow h \rightarrow 2$. An undirected edge arises when there are some variables being conditioned upon. Graphical implications for conditional independence are extended from d-separation to m-separation, see definitions in Section \ref{def}. Moreover one can project a DAG with latent and selection variables to a Markov equivalent MAG on the observed margin. The projection is described in Section \ref{OMESG}. The resulting MAG not only captures the exact conditional independence of observed variables in the original graph but also preserves ancestral relations. In addition, Gaussian variables associated with MAGs are curved exponential families \citep{richardson2002}, and hence have some desirable statistical properties. 
\par
Graphs in this paper are directed and contain no undirected edge. Extensions to \emph{summary graphs} and MAGs with undirected edges are straightforward and we have placed them in the supplementary materials. Note that summary graphs defined in \citet{wermuth2011} are actually the same as ADMGs with undirected components at the top. Graphically, one just needs to change the dashed lines to bidirected edges and they encode the same conditional independence. We include details of this discussion in the supplementary materials.
\par
Learning causal structures via testing only conditional independence leads to another problem. Different MAGs can imply the same set of constraints on variables, for example Figures \ref{MEF}(i) and (ii) both only encode $X_1 \indep X_4 \mid X_2,X_3$. We say such MAGs are \emph{Markov equivalent}, and are in the same \emph{Markov equivalence class}. Thus equivalent graphs represent the same set of distributions. Although each class can be uniquely described by a \emph{partial ancestral graph} (PAG) \citep{colombo2012learning}, non-experimental data cannot distinguish graphs in the same class.  Therefore identifying conditions for Markov equivalence is important for modelling and estimating causal effects from data. 
\par
There have been three graphical characterizations that give necessary and sufficient conditions for when two MAGs are equivalent. Among those three criteria, only \citet{ali2009} provide a polynomial time algorithm to verify Markov equivalence. \citet{Zhao2005} characterize MAGs by \emph{minimal collider paths} (MCPs).  The criterion of \citet{Spirtes97apolynomial} uses \emph{discriminating paths}, which we will define in Section 3 (we will employ them in our proofs). This paper gives a new characterization and it lead to a faster algorithm to test equivalence compared to existing ones. Also we show a similar equivalence criterion for wider classes of acyclic graphs, ADMGs. 
 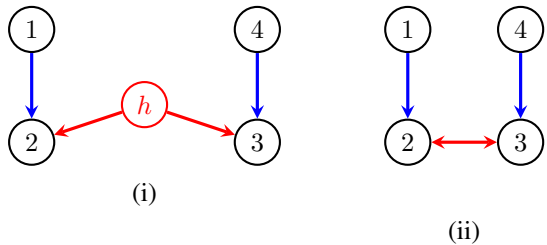
\begin{figure}[ht]
   \begin{tikzpicture}
   [rv/.style={circle, draw, thick, minimum size=6mm, inner sep=0.8mm}, node distance=15mm, >=stealth]
   \pgfsetarrows{latex-latex};
 \begin{scope}
   \node[rv]  (1)            {$1$};
   \node[rv, below of=1] (2) {$2$};
   \node[rv, right of=2, yshift=5mm, red] (3) {$h$};
   \node[rv, right of=3, yshift=-5mm] (4) {$3$};
   \node[rv, above of=4] (5) {$4$};
   \draw[->, very thick, blue] (1) -- (2);
   \draw[<-, very thick, red] (2) -- (3);
   \draw[->, very thick, red] (3) -- (4);
   \draw[<-, very thick, blue] (4) -- (5);
   \node[below of=3, yshift=0.3cm] {(i)};
   \end{scope}
 \begin{scope}[xshift = 5cm]
   \node[rv]  (1)                        {$1$};
   \node[rv, below of=1] (2)             {$2$};
   \node[rv, right of=2, xshift=0cm] (3) {$3$};
   \node[rv, above of=3] (4)             {$4$};
   \draw[->, very thick, blue] (1) -- (2);
   \draw[<->, very thick, red] (2) -- (3);
   \draw[<-, very thick, blue] (3) -- (4);
   \node[below of=3, xshift=-7.5mm, yshift=0.3cm] {(ii)};
   \end{scope}
 \end{tikzpicture}
  \caption{(i) A DAG with latent variable $h$. (ii) A Markov equivalent MAG of (i) on the margin $\{1,2,3,4\}$.}
 \label{LDAG}
 \end{figure}
 \\
In Section \ref{def}, we give basic definitions and terminology for graphical models. In Section \ref{sec:mainres} we present the main results, including theorems on the Markov equivalence of MAGs and ADMGs. Algorithms to verify Markov equivalence and their complexities are shown in Section \ref{algorithm}.  Missing proofs are found in the appendix.

\section{DEFINITIONS}\label{def}
\subsection{Graphs}
A \emph{graph} $\mathcal{G}$ consists of a vertex set $\mathcal{V}$ and an edge set $\mathcal{E}$ of distinct pairs of vertices. For an edge in $\mathcal{E}$ connecting vertices $a$ and $b$, we say these two vertices are the \emph{endpoints} of the edge and the two vertices are \emph{adjacent} (if there is no edge between $a$ and $b$, they are \emph{nonadjacent}).
\par

A \emph{path} is a set of distinct vertices $v_{i}, 1 \leq i \leq k$ such that $v_{i}$ and $v_{i+1}$ is connected by some edge for all $i \leq k-1$. A path is \emph{directed} if its edges are all directed and point in the same direction. A graph $\mathcal{G}$ is \emph{acyclic} if there is no directed cycle (any \emph{directed path} such that $v_{1} \rightarrow v_{2} \rightarrow ... \rightarrow v_{k}$ and $v_{k} \rightarrow v_{1}$). A \emph{graph} $\mathcal{G}$ is called an \emph{acyclic directed mixed graph} (ADMG) if it is \emph{acyclic} and contains only directed and bidirected edges. 
\par
For a vertex $v$ in an ADMG $\mathcal{G}$, we define the following sets:
\begin{align*}
\pa_{\mathcal{G}}(v) &= \{w: w \rightarrow v \text{ in } \mathcal{G}\}\\
\sib_{\mathcal{G}}(v) &= \{w:w \leftrightarrow v \text{ in } \mathcal{G}\}\\
\an_{\mathcal{G}}(v) &= \{w: w \rightarrow ... \rightarrow v \text{ in } \mathcal{G} \text{ or } w=v\}\\
\de_{\mathcal{G}}(v) &= \{w: v \rightarrow ... \rightarrow w \text{ in } \mathcal{G} \text{ or } w=v\}\\
\dis_{\mathcal{G}}(v) &= \{w: w \leftrightarrow ... \leftrightarrow v \text{ in } \mathcal{G} \text{ or } w=v\}.
\end{align*}
They are known as the \emph{parents}, \emph{siblings}, \emph{ancestors}, \emph{descendants} and \emph{district} of $v$, respectively. These sets are also defined disjunctively for a set of vertices $W \subseteq \mathcal{V}$. For example $\pa_{\mathcal{G}}(W) = \bigcup_{w \in W} \pa_{\mathcal{G}}(w)$. Vertices in the same district are connected by a bidirected path and this is an equivalence relation, so we can partition $\mathcal{V}$ and denote the \emph{districts} of a \emph{graph} $\mathcal{G}$ by $\mathcal{D}(\mathcal{G})$. We sometimes ignore the subscript if the graph we refer to is clear, for example $\an (v)$ instead of $\an_{\mathcal{G}}(v)$.

\subsection{Separation Criterion}
For a path $\pi$ with vertices $v_{i}$, $1 \leq i \leq k$ we call $v_{1}$ and $v_{k}$ the \emph{endpoints} of $\pi$ and any other vertices the \emph{nonendpoints} of $\pi$. For a nonendpoint $w$ in $\pi$, it is a \emph{collider} if $\rqarrow$ $w$ $\lqarrow$ on $\pi$ and a \emph{noncollider} otherwise (an edge $\rqarrow$ is either $\rightarrow$ or $\leftrightarrow$). For two vertices $a,b$ and a disjoint set of vertices $C$ in $\mathcal{G}$ ($C$ might be empty), a path $\pi$ is \emph{m-connecting} $a,b$ given $C$ if (i) $a,b$ are endpoints of $\pi$, (ii) every noncollider is not in $C$ and (iii) every collider is in $\an_{\mathcal{G}}(C)$. A \emph{collider path} is a path where all the nonendpoints vertices are colliders.

\begin{definition}
For three disjoint sets $A,B$ and set $C$ ($A,B$ are non-empty), $A$ and $B$ are \emph{m-separated} by $C$ in $\mathcal{G}$ if there is no m-connecting path between any $a \in A$ and any $b \in B$ given $C$. We denote the m-separation by $A \perp_m B \mid C$. 
\end{definition}

For a triple ($a,b,c$) in a graph $\mathcal{G}$, we call this an \emph{unshielded triple} if $\{a,b\}$ and $\{b,c\}$ are adjacent but $\{a,c\}$ are not. If $b$ is a also collider in the path $\langle a,b,c \rangle$ then we also call the triple an \emph{unshielded collider}, and an \emph{unshielded noncollider} otherwise.

\begin{definition}\label{ordinary}
A distribution $P(X_{V})$ is said to be in the \emph{Markov model} of an ADMG $\mathcal{G}$ if whenever $A \perp_m B\mid C$ in $\mathcal{G}$, $X_{A} \indep X_{B}\mid X_{C}$ in $P$.
\end{definition}
This definition, known as the global Markov property, associates distributions with a given ADMG via m-separations. There are also other equivalent definitions in terms of the local Markov property or moralization  \citep[see][]{richardlocalmarkov}, but the global Markov property has the advantage of being 
`complete': that is, if there is no m-separation then almost every distribution
in the model does not satisfy the associated conditional independence.
\begin{remark}
The model in Definition \ref{ordinary} defined by conditional independences is sometimes referred as the \emph{ordinary Markov model}. There is a model called the \emph{nested Markov model} defined by generalized conditional independences which captures all the equality constraints that arise from latent variable model \citep[see][]{richardson2017nested, Evans_2018}.
\end{remark}
For an ADMG $\mathcal{G}$, given a subset $W \subseteq \mathcal{V}$, the induced subgraph $\mathcal{G}_{W}$ is defined as the graph with vertex set $W$ and edges in $\mathcal{G}$ whose endpoints are both in $W$. Also for the \emph{district} of a vertex $v$ in an induced subgraph $\mathcal{G}_{W}$, we may denote it by $\dis_{W}(v)$. 
\subsection{MAGs}
\begin{definition}\label{maximal}
An ADMG $\mathcal{G}$ is \emph{maximal} if for every pair of \emph{nonadjacent} vertices $a$ and $b$, there exists some set $C$ such that $a,b$ are m-separated given $C$ in $\mathcal{G}$.
\end{definition}

\begin{definition}
An ADMG $\mathcal{G}$ is \emph{ancestral} if for every $v \in \mathcal{V}$, $\sib_{\mathcal{G}}(v) \cap \an_{\mathcal{G}}(v) = \emptyset$.
\end{definition}

\begin{definition}
An ADMG $\mathcal{G}$ is called a \emph{maximal ancestral graph} (MAG) if it is \emph{maximal} and \emph{ancestral}.
\end{definition}
Note that in an ancestral graph, there is at most one edge between each pair of vertices.

\begin{figure}
  \begin{tikzpicture}
  [rv/.style={circle, draw, thick, minimum size=6mm, inner sep=0.8mm}, node distance=14mm, >=stealth]
  \pgfsetarrows{latex-latex};
\begin{scope}
  \node[rv]  (1)            {$1$};
  \node[rv, right of=1] (2) {$2$};
  \node[rv, below of=1] (3) {$3$};
  \node[rv, right of=3] (4) {$4$};
  \draw[<->, very thick, red] (1) -- (3);
  \draw[<->, very thick, red] (2) -- (4);
  \draw[<->, very thick, red] (3) -- (4);
  \draw[->, very thick, color=blue] (3) -- (2);
  \draw[->,very thick, blue] (4) -- (1);
  \node[below right of=3,xshift=-0.3cm] {(i)};
  \end{scope}
\begin{scope}[xshift = 3cm]
   \node[rv]  (1)           {$1$};
  \node[rv, right of=1] (2) {$2$};
  \node[rv, below of=1] (3) {$3$};
  \draw[<->, very thick, red] (1) -- (3);
  \draw[->, very thick, blue] (1) -- (2);
  \draw[->, very thick, blue] (2) -- (3);
  \node[below right of=3,xshift=-0.3cm] {(ii)};
  \end{scope}
 \begin{scope}[xshift = 6cm]
   \node[rv]  (1)           {$1$};
  \node[rv, right of=1] (2) {$2$};
  \node[rv, below of=1] (3) {$3$};
  \node[rv, right of=3] (4) {$4$};
  \draw[<->, very thick, red] (2) -- (4);
  \draw[<->, very thick, red] (3) -- (4);
  \draw[->, very thick, color=blue] (3) -- (2);
  \draw[->,very thick, blue] (4) -- (1);
  \draw[<->, very thick,red] (1) -- (2);
  \node[below right of=3,xshift=-0.3cm] {(iii)};
  \end{scope}
\end{tikzpicture}
\caption{(i) An ancestral graph that is not maximal. (ii) A maximal graph that is not ancestral. (iii) A maximal ancestral graph. }
\label{MAGs}
\end{figure}
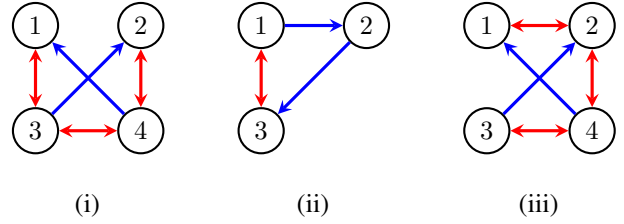

For example, the graph in Figure \ref{MAGs}(i) is not maximal because 1 and 2 are not adjacent, but no subset of $\{3,4\}$ will m-separate them. (ii) is not ancestral as 1 is a sibling of 3, which is also one of its descendants. (iii) is a MAG in which the only conditional independence is $X_1 \indep X_3 \mid X_4$.
\par
\begin{definition}
Two graphs $\mathcal{G}_{1}$ and $\mathcal{G}_{2}$ with the same vertex sets, are said to be \emph{Markov equivalent} if any m-separation holds in $\mathcal{G}_{1}$ if and only if it holds in $\mathcal{G}_{2}$.
\end{definition}

\subsection{Heads and Tails}
For a vertex set $W \subseteq \mathcal{V}$, we define the \emph{barren subset} of $W$ as:$$\barren_{\mathcal{G}}(W) = \{w \in W:\de_{\mathcal{G}}(w) \cap W = \{w\}\}.$$
A vertex set $H$ is called a \emph{head} if (i) $\barren_{\mathcal{G}}(H) = H$ and (ii) $H$ is contained in a single district in $\mathcal{G}_{\an (H)}$. For an ADMG $\mathcal{G}$, we denote the set of all heads in $\mathcal{G}$ by $\mathcal{H}(\mathcal{G})$. A \emph{tail} of a $\head$ is defined as:$$\tail(H) = (\dis_{\an(H)}(H) \setminus H) \cup \pa_{\mathcal{G}}(\dis_{\an(H)}(H)).$$
Distributions associated with an Markov model can be factorized in terms of heads and tails \citep{DBLP:journals/corr/Richardson14}.

\begin{definition}
The \emph{parametrizing set} of $\mathcal{G}$, denoted by $\mathcal{S}(\mathcal{G})$ is defined as:
$$\mathcal{S}(\mathcal{G}) = \{H\cup A:H \in \mathcal{H}(\mathcal{G})\text{ and } \emptyset \subseteq A \subseteq \tail(H)\}.$$
\end{definition}

Note that it is called the parametrizing set because it is closely related to the discrete parameterization \citep{Evans2014}. However the theorem developed in this paper is entirely non-parametric. We also define $\mathcal{S}_{k}(\mathcal{G})$ for $k \geq 2$ as: 
$$\mathcal{S}_{k}(\mathcal{G}) = \{S \in \mathcal{S}(\mathcal{G}): 2 \leq \abs{S} \leq k\}.$$ 
In particular, we are interested in: 
\begin{align*}
\Tilde{\mathcal{S}_{3}}(\mathcal{G}) &= \{S \in \mathcal{S}_{3}(\mathcal{G}) \mid \text{there are 1 or 2 adjacencies} \\ 
&\qquad \qquad \text{among the vertices in }S\}.
\end{align*}
We write $\mathcal{S}, \mathcal{S}_{k}, \Tilde{\mathcal{S}}_{3}$ if the graph $\mathcal{G}$ we are referring to is clear. Note that we are not considering any singleton sets in $\mathcal{S}_k(\mathcal{G})$ or $\Tilde{\mathcal{S}}_3(\mathcal{G})$; these are just all vertices because $\{v\}$ is trivially a head. For a MAG $\mathcal{G}$, a pair of vertices are in $\mathcal{S}(\mathcal{G})$ if and only if they are adjacent (This is easy to prove).
\par
We give an example to illustrate what the sets defined above are. Consider the three MAGs in Figure \ref{MEF}, Table \ref{tab1} lists their heads and tails, Table \ref{tab2} lists their parametrizing sets $\mathcal{S}$ and Table \ref{tab3} lists their $\mathcal{S}_{3}$ and $\Tilde{\mathcal{S}}_{3}$.
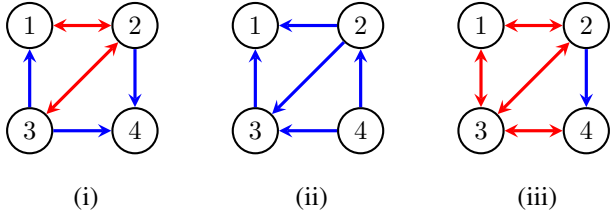
\begin{figure}
  \begin{tikzpicture}
  [rv/.style={circle, draw, thick, minimum size=6mm, inner sep=0.8mm}, node distance=14mm, >=stealth]
  \pgfsetarrows{latex-latex};
\begin{scope}
  \node[rv]  (1)            {$1$};
  \node[rv, right of=1] (2) {$2$};
  \node[rv, below of=1] (3) {$3$};
  \node[rv, right of=3] (4) {$4$};
  \draw[<->, very thick, red] (1) -- (2);
  \draw[<->, very thick, red] (2) -- (3);
  \draw[->, very thick, color=blue] (3) -- (4);
  \draw[->,very thick, blue] (3) -- (1);
  \draw[->,very thick, blue] (2) -- (4);
  \node[below of=3, xshift=7.5mm, yshift=0.5cm] {(i)};
  \end{scope}
\begin{scope}[xshift = 3cm]
  \node[rv]  (1)            {$1$};
  \node[rv, right of=1] (2) {$2$};
  \node[rv, below of=1] (3) {$3$};
  \node[rv, right of=3] (4) {$4$};
  \draw[<-, very thick, blue] (1) -- (2);
  \draw[->, very thick, blue] (2) -- (3);
  \draw[<-, very thick, blue] (3) -- (4);
  \draw[->,very thick, blue] (3) -- (1);
  \draw[<-,very thick, blue] (2) -- (4);
  \node[below of=3, xshift=7.5mm, yshift=0.5cm] {(ii)};
  \end{scope}
 \begin{scope}[xshift = 6cm, yshift = 0cm]
  \node[rv]  (1)            {$1$};
  \node[rv, right of=1] (2) {$2$};
  \node[rv, below of=1] (3) {$3$};
  \node[rv, right of=3] (4) {$4$};
  \draw[<->, very thick, red] (1) -- (2);
  \draw[<->, very thick, red] (2) -- (3);
  \draw[<->, very thick, red] (3) -- (4);
  \draw[<->,very thick, red] (3) to (1);
  \draw[->,very thick, blue] (2) -- (4);
  \node[below of=3, xshift=7.5mm, yshift=0.5cm] {(iii)};
  \end{scope}
\end{tikzpicture}
 \caption{Three MAGs where (i) and (ii) are Markov equivalent but (iii) is not. }
 \label{MEF}
\end{figure}

\begin{table}[h]
\caption{Heads and tails of graphs in Figure \ref{MEF}}\label{tab1}
\begin{center}
\begin{tabular}{|c|c|c||c|c|c|} 
 \hline
  Figure & heads & tails & Figure & heads & tails \\ 
 \hline
 \multirow{6}{*}{\ref{MEF}(i)} & 1 & 3 & \multirow{10}{*}{\ref{MEF}(iii)} & 1 & $\emptyset$ \\
  & 2 & $\emptyset$ & & 2 & $\emptyset$\\
  & 3 & $\emptyset$ &  & 3 & $\emptyset$\\
  & 4 & 2,3 & &4 & 2\\
  & 1,2 & 3&  & 1,2 &$\emptyset$ \\
  & 2,3 & $\emptyset$ &  &1,3  & $\emptyset$\\
 \cline{1-3}
  \multirow{4}{*}{\ref{MEF}(ii)}&1  &2,3  &  & 2,3 & $\emptyset$\\  
  & 2 & 4 &  & 3,4 & 2\\
  & 3 & 2,4 &  & 1,2,3 & $\emptyset$\\
  & 4 & $\emptyset$ &  & 1,3,4 & 2\\
 \hline
\end{tabular}
\end{center}
\end{table}

\begin{table}[h]
\caption{Parametrizing set of graphs in Figure \ref{MEF}}\label{tab2}
\begin{center}
\begin{tabular}{|c|c|c|} 
  \hline
  Figure & parametrizing sets & missing sets \\ 
  \hline
  \multirow{4}{*}{\ref{MEF}(i)(ii)} & $\{1\},\{2\},\{3\},\{4\}$ & $\{1,4\}$\\
  & $\{1,2\},\{1,3\},\{2,3\}$ & $\{1,2,4\}$\\ 
  & $\{2,4\},\{3,4\}$ & $\{1,3,4\}$ \\
  &$\{1,2,3\},\{2,3,4\}$ & $\{1,2,3,4\}$\\
 \hline
  \multirow{5}{*}{\ref{MEF}(iii)}& $\{1\},\{2\},\{3\},\{4\}$ & $\{1,4\}$\\
  & $\{1,2\},\{1,3\},\{2,3\}$ & $\{1,2,4\}$\\
  & $\{2,4\},\{3,4\}$ & \\ 
  & $\{1,2,3\},\{1,3,4\},\{2,3,4\}$ & \\
  & $\{1,2,3,4\}$ & \\[1ex] 
 \hline
\end{tabular}
\end{center}
\end{table}

\begin{table}[h]
\caption{$\mathcal{S}_{3}$ and $\Tilde{\mathcal{S}}_{3}$ graphs in Figure \ref{MEF}}\label{tab3}
\begin{center}
\begin{tabular}{|c|c|c|} 
  \hline
  \multirow{1}{*}[-1pt]{Figure} & \multirow{1}{*}[-1pt]{$\mathcal{S}_{3}$} & \multirow{1}{*}[-1pt]{$\Tilde{\mathcal{S}}_{3}$}\\ [0.5ex] 
  \hline
  \multirow{3}{*}{\ref{MEF}(i)(ii)} & $\{1,2\},\{1,3\},\{2,3\}$ & $\{1,2\},\{1,3\}$\\
  & $\{2,4\},\{3,4\}$ & $\{2,3\}, \{2,4\}$ \\
  &$\{1,2,3\},\{2,3,4\}$ & $\{3,4\}$\\
 \hline
  \multirow{4}{*}{\ref{MEF}(iii)} & $\{1,2\},\{1,3\},\{2,3\}$ & $\{1,2\},\{1,3\}$\\
  & $\{2,4\},\{3,4\}$ & $\{2,3\}, \{2,4\}$\\ 
  & $\{1,2,3\},\{2,3,4\}$ & $\{3,4\}$\\
  & $\{1,3,4\}$ & $\{1,3,4\}$\\ [1ex] 
 \hline
\end{tabular}
\end{center}
\end{table}
In Figure \ref{MEF}, (i) is Markov equivalent to (ii) and they also have the same parametrizing sets; however, (iii) has a different parametrizing set and is not Markov equivalent to either (i) or (ii). In Figure \ref{MEF}(i) and (ii), $1 \perp_m 4\mid 2,3$ is the only m-separation while Figure \ref{MEF}(iii) encodes $1 \perp_m 4\mid 2$. Note that these conditional independences correspond precisely to these missing sets which are in the form $\{a,b\} \cup C'$ where $a \perp_m b\mid C \text{ and } C' \subseteq C$. Thus it is reasonable to conjecture that equivalent graphs should have the same parametrizing sets. It turns out that not only is this true, but in fact equivalence conditions can be refined even further and it is sufficient to consider $\mathcal{S}_{3}$ or $\Tilde{\mathcal{S}}_{3}$.

\section{MARKOV EQUIVALENCE} \label{sec:mainres}

\subsection{Previous Work}

The first theorem on Markov equivalence of MAGs is from \citet{Spirtes97apolynomial}.

\begin{theorem}\label{thm:1.4}
Two MAGs $\mathcal{G}_{1}$ and $\mathcal{G}_{2}$ are Markov equivalent if and only if (i) $\mathcal{G}_{1}$ and $\mathcal{G}_{2}$ have the same adjacencies, (ii) $\mathcal{G}_{1}$ and $\mathcal{G}_{2}$ have the same unshielded colliders and  (iii) if $\pi$ forms a discriminating path for $b$ in $\mathcal{G}_{1}$ and $\mathcal{G}_{2}$, then $b$ is a collider on the path $\pi$ in $\mathcal{G}_{1}$ if and only it is a collider on the path $\pi$ in $\mathcal{G}_{2}$.
\end{theorem}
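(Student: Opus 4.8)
The plan is to treat the two directions separately and to reduce both to reasoning about individual m-connecting paths. The one fact about MAGs I would invoke repeatedly is that nonadjacent vertices $a,b$ are always m-separated by $\an_{\mathcal{G}}(\{a,b\})\setminus\{a,b\}$ (which follows from ancestrality together with maximality), plus the elementary observation that whether a fixed path m-connects its endpoints given $Z$ depends only on the edge-marks along that path and on which of its colliders are ancestors of $Z$.

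\textbf{Necessity.} Suppose $\mathcal{G}_1$ and $\mathcal{G}_2$ are Markov equivalent. Each of (i)--(iii) can be phrased purely in terms of the collection of true m-separation statements, so it is an invariant of the equivalence class. For (i): by the separator fact, two vertices are adjacent iff no set m-separates them. For (ii): given (i) the graphs have the same unshielded triples $\langle a,b,c\rangle$, and $b$ is a collider on such a triple iff there is no m-separating set of $a$ and $c$ that excludes $b$ --- if $b$ is a noncollider, the length-two subpath through $b$ m-connects $a,c$ whenever $b\notin Z$; if $b$ is a collider, ancestrality gives $b\notin\an(\{a,c\})$, so $\an(\{a,c\})\setminus\{a,c\}$ separates $a,c$ and omits $b$. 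For (iii): the notion of a discriminating path for $b$ is designed exactly so that the collider/noncollider status of $b$ on the path is forced by a specific m-separation statement (conditioning on the intermediate vertices, which are parents of the far endpoint and hence automatically its ancestors), so it too transfers.

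\textbf{Sufficiency.} Assume (i)--(iii); by symmetry it is enough to show that every m-separation of $\mathcal{G}_2$ holds in $\mathcal{G}_1$, i.e. that whenever a path m-connects $a$ and $b$ given $Z$ in $\mathcal{G}_2$, some path does so in $\mathcal{G}_1$. I would argue by induction (on $\abs{Z}$, or on the length of a shortest connecting path) and take $p=\langle v_0,\dots,v_n\rangle$ to be a \emph{shortest} m-connecting path in $\mathcal{G}_2$, using the standard structural facts about such paths (every collider on $p$ is an ancestor of $Z$; non-consecutive vertices are nonadjacent except in tightly controlled configurations). For each triple $\langle v_{i-1},v_i,v_{i+1}\rangle$: if it is unshielded in $\mathcal{G}_2$, then by (i) it is an unshielded triple of $\mathcal{G}_1$ with the same collider status by (ii), which determines the marks at $v_i$ in $\mathcal{G}_1$, so along the unshielded stretches the corresponding edges of $\mathcal{G}_1$ behave identically. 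The shielded triples are the crux --- (i) and (ii) say nothing about them --- and this is where (iii) enters: a shielded triple on a shortest connecting path whose middle mark in $\mathcal{G}_1$ disagreed with that in $\mathcal{G}_2$ either lets one shorten $p$, contradicting minimality, or else occurs at the end of a discriminating path for $v_i$ realised in both graphs, whence (iii) forces agreement. Reassembling the marks and re-checking the ancestor-of-$Z$ conditions in $\mathcal{G}_1$ (the step that invokes the induction hypothesis after any rerouting) then produces the desired m-connecting path.

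\textbf{Main obstacle.} The necessity direction and the unshielded part of the sufficiency direction are routine once the separator lemma and the discriminating-path lemma are in place. The technical heart is the shielded-triple analysis: exhibiting the right discriminating path --- choosing its two endpoints and its chain of collider-parents so that hypothesis (iii) literally applies --- and organising the induction so that, after any shortening or rerouting, the ``every collider is an ancestor of $Z$'' side-conditions on the new path in $\mathcal{G}_1$ can be re-established. This amounts to a careful case analysis of how a shortest m-connecting path can interact with a shielding edge, and it is where essentially all the difficulty of the theorem lies.
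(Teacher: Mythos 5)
The first thing to note is that the paper contains no proof of this statement to compare yours against: Theorem 3.1 is quoted from \citet{Spirtes97apolynomial} as prior work, and the paper only \emph{uses} it (together with the fact, cited from \citet{ali2009}, that on a discriminating path every separating set for the endpoints contains all the $q_i$ and contains $b$ iff $b$ is a noncollider) to derive its own results. Judged on its own terms, your necessity direction is essentially right, with one slip: your characterization of unshielded colliders is stated backwards. As your own justification shows, $b$ is a collider on the unshielded triple $\langle a,b,c\rangle$ iff \emph{some} m-separating set of $a,c$ excludes $b$ (equivalently, iff not every separating set contains $b$); the sentence ``iff there is no m-separating set of $a$ and $c$ that excludes $b$'' asserts the opposite and should be corrected.

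The genuine gap is in the sufficiency direction, and it is not a small one: the step you label the ``technical heart'' is precisely the content of the theorem, and you assert it rather than prove it. Concretely, the claim that a shielded triple on a shortest m-connecting path at which the marks of $\mathcal{G}_1$ and $\mathcal{G}_2$ disagree ``either lets one shorten $p$ \ldots or else occurs at the end of a discriminating path for $v_i$ realised in both graphs'' is exactly what needs a construction and a case analysis; nothing in (i)--(iii) hands you such a discriminating path, and building one is delicate. In particular, to invoke (iii) you need the \emph{same} path $\pi$ to be discriminating in both graphs, which requires already knowing that the two graphs agree on the edges and marks along $\pi$ (that the $q_i$ are colliders and parents of $y$ in both) --- but agreement on marks at shielded triples is what you are trying to establish, so without a carefully organised induction (this is why \citet{ali2009} introduce colliders ``with order'', and why the original Spirtes--Richardson argument is long) the argument is circular. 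The induction you propose is also underspecified: it is not stated what the inductive hypothesis is, on what quantity it runs, or why the ancestor-of-$Z$ conditions can be re-established after rerouting. As it stands the proposal is a plausible plan for the easy half plus a statement of where the difficulty lies, not a proof of the theorem.
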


For $x$ and $y$ nonadjacent, a \emph{discriminating path} $\pi = \langle x, q_{1},\ldots,q_{m}, b,y \rangle$, $m \geq 1$ for $b$, is a subgraph comprised of a collection of paths:
\begin{align*}
 x& \rqarrow q_{1}\leftrightarrow \cdots \leftrightarrow q_{i}\rightarrow y, \qquad 1 \leq i \leq m;\\ 
x&\rqarrow q_{1}\leftrightarrow \cdots \leftrightarrow q_{m} \lqarrow b \rqarrow y.
\end{align*}
\par
For example, $\langle 1,2,3,4 \rangle$ forms a discriminating path for 3 in both Figure \ref{MEF}(i) and (iii), but not (ii). The vertex 3 is a collider on the path in (iii) but not (i), so (i) and (iii) are not equivalent; however (i) and (ii) are equivalent.  In general, the cost of identifying all the discriminating paths is not polynomial in the number of vertices and edges. However, we will make use of Theorem \ref{thm:1.4} in later proofs.

\subsection{Markov Equivalence Of MAGs}\label{MEMAG}
We now present the main result of this paper.
\begin{theorem}\label{thm:1.1}
Let $\mathcal{G}_{1}$ and $\mathcal{G}_{2}$ be two MAGs. Then $\mathcal{G}_{1}$ and $\mathcal{G}_{2}$ are Markov equivalent if and only if $\mathcal{S}(\mathcal{G}_{1})=\mathcal{S}(\mathcal{G}_{2})$. 
\end{theorem}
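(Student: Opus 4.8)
The plan is to route the argument through the Spirtes--Richardson criterion (Theorem~\ref{thm:1.4}), which already equates Markov equivalence of two MAGs with the conjunction of its three conditions. It is therefore enough to prove that $\mathcal{S}(\mathcal{G}_1)=\mathcal{S}(\mathcal{G}_2)$ holds if and only if $\mathcal{G}_1$ and $\mathcal{G}_2$ have (i) the same adjacencies, (ii) the same unshielded colliders, and (iii) the same collider status on every path that is discriminating in both.

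\emph{From (i)--(iii) to equality of parametrizing sets.} By Theorem~\ref{thm:1.4} these conditions imply $\mathcal{G}_1$ and $\mathcal{G}_2$ are Markov equivalent, hence have exactly the same m-separations, so it suffices to show that $\mathcal{S}(\mathcal{G})$ is a function of the m-separation relation of $\mathcal{G}$ alone. I would obtain this from a lemma describing the sets \emph{missing} from $\mathcal{S}(\mathcal{G})$: a set $S$ with $\abs{S}\geq 2$ lies outside $\mathcal{S}(\mathcal{G})$ if and only if there are $a,b\in S$ and a set $C$ with $S\setminus\{a,b\}\subseteq C$ and $a\perp_m b\mid C$ in $\mathcal{G}$ --- equivalently, some pair in $S$ has no collider path all of whose interior vertices lie in $S$. (This is the pattern visible in the ``missing sets'' columns of Table~\ref{tab2}, where each missing set is $\{a,b\}$ together with an arbitrary subset of a separating set of $a$ and $b$.) The nontrivial half of the lemma --- that a genuine $H\cup A$ with $H$ a head and $A\subseteq\tail(H)$ is never of this form --- rests on the structure of heads and tails in a \emph{maximal} ancestral graph (bidirected connecting paths inside a head, and maximality to rule out degenerate configurations). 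Granting the lemma, equal m-separations give equal families of missing sets, hence $\mathcal{S}(\mathcal{G}_1)=\mathcal{S}(\mathcal{G}_2)$.

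\emph{From equality of parametrizing sets to (i)--(iii).} Condition (i) is immediate: as noted in Section~\ref{def}, two vertices lie together in $\mathcal{S}(\mathcal{G})$ exactly when they are adjacent, so $\mathcal{G}_1$ and $\mathcal{G}_2$ have the same skeleton and hence the same unshielded triples. For (ii) I would prove that for an unshielded triple $(a,b,c)$, the middle vertex $b$ is a collider on $\langle a,b,c\rangle$ if and only if $\{a,b,c\}\in\mathcal{S}(\mathcal{G})$: when $b$ is a collider one exhibits an explicit head-plus-tail-subset decomposition of $\{a,b,c\}$ (take the head $\{b\}$ when both edges at $b$ carry tails at $a$ and $c$, since then $\{a,c\}\subseteq\pa_{\mathcal{G}}(b)=\tail(\{b\})$, and otherwise enlarge the head along the bidirected edge(s) incident to $b$, the remaining vertex sitting in its tail), and when $b$ is a noncollider the ancestral relation it forces, together with the MAG identity $\tail(\{v\})=\pa_{\mathcal{G}}(v)$ (valid because $\dis_{\an(\{v\})}(v)=\{v\}$), rules out every admissible head inside $\{a,b,c\}$ unless $a$ and $c$ were adjacent. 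Hence the two graphs agree on unshielded colliders. For (iii) I would establish the discriminating-path analogue: if $\pi=\langle x,q_1,\dots,q_m,b,y\rangle$ is discriminating for $b$, then $b$ is a collider on $\pi$ if and only if $\{x,q_1,\dots,q_m,b,y\}\in\mathcal{S}(\mathcal{G})$; since $\pi$ is discriminating in both graphs and this vertex set is common to them, equality of parametrizing sets transfers the collider status. I would prove this by induction on $m$, peeling off an end vertex of the bidirected segment $q_1\leftrightarrow\cdots\leftrightarrow q_m$ and tracking the changes to ancestor sets, districts, and hence head/tail structure of $\{x,q_1,\dots,q_m,b,y\}$.

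The discriminating-path lemma is the main obstacle. Adjacencies and unshielded colliders are decided by two- and three-vertex subsets, but here one must tie membership of a single, possibly large, set in $\mathcal{S}(\mathcal{G})$ to the orientation of the lone edge at $b$, over a path whose interior vertices $q_i$ are constrained only to be colliders and parents of $y$. I expect the induction to close only after strengthening the inductive hypothesis so that it simultaneously controls which proper subsets of $\{x,q_1,\dots,q_m,b,y\}$ are heads and what their tails are, and keeps track of the auxiliary edges $q_i\to y$ (a discriminating path being only a subgraph of $\mathcal{G}$).
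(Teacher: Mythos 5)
Your overall route is the same as the paper's: both directions are driven by the characterization of the sets \emph{missing} from $\mathcal{S}(\mathcal{G})$ (your ``missing sets'' lemma is exactly the paper's Proposition \ref{msepaandS}), combined with Theorem \ref{thm:1.4}; and your treatment of conditions (i) and (ii) matches the paper's Proposition \ref{Prodiscriminatinggraph}(i)--(ii) in substance. However, there are two genuine gaps. First, the nontrivial half of the missing-sets lemma --- that a set of the form $H \cup A$ with $H$ a head and $A \subseteq \tail(H)$ can never be split by an m-separation of two of its members given the rest --- is only gestured at (``bidirected connecting paths inside a head, and maximality''), whereas this is the paper's main technical work: one must take the collider path joining the two chosen vertices inside $\an(H)$ and, when some intermediate vertices fail to be ancestors of the conditioning set, explicitly reroute along directed paths to the endpoints to exhibit an m-connecting path; maximality is not actually what is used there.

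Second, and more seriously, your discriminating-path step is left open: you replace the needed statement by ``$b$ is a collider on $\pi$ iff the \emph{full} vertex set $\{x,q_1,\dots,q_m,b,y\}$ lies in $\mathcal{S}(\mathcal{G})$'' and propose an induction on $m$ over head/tail structure that you yourself say you cannot yet close. This is the key step of the converse, so as written the proof is incomplete. Moreover, the induction is unnecessary: the lemma you already stated does the work directly. The paper instead proves that $\{x,b,y\} \in \mathcal{S}(\mathcal{G})$ iff $b$ is a collider on $\pi$, using Proposition \ref{msepaandS} together with the known fact \citep[p.~11]{ali2009} that every set $C$ with $x \perp_m y \mid C$ contains all the $q_i$, and contains $b$ iff $b$ is a noncollider on $\pi$ (plus the observation that no set separating $x$ from $b$ can contain $y$). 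The same two ingredients also prove your full-path version in a few lines --- if $b$ is a noncollider, any separating set for $x,y$ contains $\{q_1,\dots,q_m,b\}$, so the set is missing; if $b$ is a collider, every nonadjacent pair inside the set is joined by a collider subpath of $\pi$ whose interior lies in the conditioning set, so no qualifying m-separation exists --- so no strengthening of an inductive hypothesis or bookkeeping of tails is needed. Note also that the paper's three-vertex formulation is what later permits the refinement to $\Tilde{\mathcal{S}}_3$ in Corollary \ref{cor:1.1.2}; your full-path variant, even once proved, would establish Theorem \ref{thm:1.1} but not that refinement.
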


Theorem \ref{thm:1.1} already provides a method to find equivalence between two MAGs by searching all the heads and corresponding tails, however, the number of heads is not polynomial in the size of the graph.

\begin{corollary}\label{cor:1.1.2}
Let $\mathcal{G}_{1}$ and $\mathcal{G}_{2}$ be two MAGs. Then $\mathcal{G}_{1}$ and $\mathcal{G}_{2}$ are Markov equivalent if and only if $\mathcal{S}_3(\mathcal{G}_1) = \mathcal{S}_3(\mathcal{G}_2)$.  
This in turn occurs if and only if 
$\Tilde{\mathcal{S}}_{3}(\mathcal{G}_{1})=\Tilde{\mathcal{S}}_{3}(\mathcal{G}_{2})$.
\end{corollary}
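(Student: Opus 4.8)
\textbf{Proof proposal for Corollary \ref{cor:1.1.2}.}

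The plan is to prove a chain of implications. By Theorem \ref{thm:1.1} it suffices to connect $\mathcal{S}(\mathcal{G}_1) = \mathcal{S}(\mathcal{G}_2)$, $\mathcal{S}_3(\mathcal{G}_1) = \mathcal{S}_3(\mathcal{G}_2)$, and $\Tilde{\mathcal{S}}_3(\mathcal{G}_1) = \Tilde{\mathcal{S}}_3(\mathcal{G}_2)$. Two of the directions are immediate: $\mathcal{S}(\mathcal{G}_1) = \mathcal{S}(\mathcal{G}_2)$ trivially implies $\mathcal{S}_3(\mathcal{G}_1) = \mathcal{S}_3(\mathcal{G}_2)$ (restrict to sets of size between $2$ and $3$), and $\mathcal{S}_3(\mathcal{G}_1) = \mathcal{S}_3(\mathcal{G}_2)$ implies $\Tilde{\mathcal{S}}_3(\mathcal{G}_1) = \Tilde{\mathcal{S}}_3(\mathcal{G}_2)$ (the tilde version is the subset of $\mathcal{S}_3$ picked out by a purely combinatorial adjacency condition, so equality of the larger sets gives equality of the filtered ones — note adjacency agreement follows because size-$2$ members of $\mathcal{S}$ are exactly the adjacent pairs). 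So the real content is the single implication $\Tilde{\mathcal{S}}_3(\mathcal{G}_1) = \Tilde{\mathcal{S}}_3(\mathcal{G}_2) \Rightarrow \mathcal{S}(\mathcal{G}_1) = \mathcal{S}(\mathcal{G}_2)$, or more practically $\Tilde{\mathcal{S}}_3(\mathcal{G}_1) = \Tilde{\mathcal{S}}_3(\mathcal{G}_2) \Rightarrow \mathcal{G}_1,\mathcal{G}_2$ Markov equivalent, after which Theorem \ref{thm:1.1} closes the loop.

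For that implication I would go through Theorem \ref{thm:1.4} and verify its three conditions from $\Tilde{\mathcal{S}}_3$ agreement. First, \emph{same adjacencies}: a pair $\{a,b\}$ is in $\mathcal{S}(\mathcal{G})$ iff $a,b$ are adjacent (stated in the excerpt), and every such pair has exactly one adjacency among its two vertices, hence lies in $\Tilde{\mathcal{S}}_3(\mathcal{G})$; so equality of the tilde sets forces the same adjacencies. Second, \emph{same unshielded colliders}: for an unshielded triple $(a,b,c)$ — so $a\!-\!b$, $b\!-\!c$ adjacent, $a,c$ not — I claim $\{a,b,c\} \in \mathcal{S}(\mathcal{G})$ iff $b$ is a collider on $\langle a,b,c\rangle$. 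One checks the "only if'': if $\{a,b,c\}$ is a head-plus-tail-subset then in $\mathcal{G}_{\an(\{a,b,c\})}$ it sits in one district, which combined with $a,c$ nonadjacent and the MAG/ancestral structure forces the arrowheads at $b$; conversely if $b$ is the collider, check that $\{a,b,c\}$ is (typically) a head, or $b$ together with one of $a,c$ forms a head with the other in its tail — either way it is in $\mathcal{S}$. Any such triple has exactly two adjacencies, so it is in $\Tilde{\mathcal{S}}_3$, and so unshielded-collider status is detected by $\Tilde{\mathcal{S}}_3$. Third, \emph{discriminating paths}: I would show that if $\pi$ is a discriminating path for $b$ in both graphs (which is well-defined once adjacencies agree), then whether $b$ is a collider on $\pi$ is determined by whether some specific size-$2$ or size-$3$ element lies in $\mathcal{S}$ — concretely, using the last three vertices $\langle q_m, b, y\rangle$ of $\pi$ together with the fact that $q_m \to y$ or $q_m \leftrightarrow y$ etc. is already pinned down, and that $q_m, b$ are adjacent while (in the discriminating configuration) the relevant endpoint relations make $\{q_m,b,y\}$ or a related triple the witness; this triple again has $1$ or $2$ adjacencies so lies in $\Tilde{\mathcal{S}}_3$.

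The main obstacle I anticipate is the discriminating-path step: unlike unshielded colliders, the collider/noncollider status of $b$ on a discriminating path is a genuinely nonlocal property of the graph, so expressing it via membership of a bounded-size set in $\mathcal{S}$ needs a real argument — essentially showing that the head/tail structure "sees" the discriminating configuration through a small window. I would handle this by first establishing the local lemma that, for the relevant triple, $\{a,b,c\} \in \mathcal{S}(\mathcal{G})$ is equivalent to a statement about arrowheads at $b$ and ancestral relations among $a,b,c$, and then invoking the definition of a discriminating path to translate "$b$ collider on $\pi$'' into exactly that arrowhead/ancestral statement for a suitable triple near the $y$-end of $\pi$. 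A secondary nuisance is bookkeeping the case $|\Tilde{\mathcal{S}}_3| = 2$ versus $3$ adjacencies cleanly, and making sure that "there are $1$ or $2$ adjacencies among the vertices in $S$" correctly includes all the triples that serve as witnesses above (it does, since a witness triple with no pair of the third kind always has a nonadjacent pair), but these are routine once the discriminating-path lemma is in place. Finally I assemble: $\Tilde{\mathcal{S}}_3$ agreement $\Rightarrow$ conditions (i)--(iii) of Theorem \ref{thm:1.4} $\Rightarrow$ Markov equivalence $\Rightarrow$ (by Theorem \ref{thm:1.1}) $\mathcal{S}(\mathcal{G}_1) = \mathcal{S}(\mathcal{G}_2)$, completing the cycle.
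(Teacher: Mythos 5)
Your overall route is the same as the paper's: reduce everything to Theorem \ref{thm:1.1}, and get the key implication ``$\Tilde{\mathcal{S}}_{3}(\mathcal{G}_{1})=\Tilde{\mathcal{S}}_{3}(\mathcal{G}_{2})\Rightarrow$ Markov equivalence'' by verifying the three conditions of Theorem \ref{thm:1.4} with small witnesses in the parametrizing set. This is exactly the content of Propositions \ref{msepaandS} and \ref{Prodiscriminatinggraph}, which the paper proves first and then invokes; its proof of the corollary simply remarks that the `if' direction of Theorem \ref{thm:1.1} only ever uses sets in $\Tilde{\mathcal{S}}_{3}$. Your easy directions ($\mathcal{S}\Rightarrow\mathcal{S}_3\Rightarrow\Tilde{\mathcal{S}}_3$, with adjacencies read off from the size-two members) are fine, as are the adjacency and unshielded-collider witnesses (the latter is Proposition \ref{Prodiscriminatinggraph}(ii), even if your sketch of its proof is loose).

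The genuine gap is in the discriminating-path step, which you yourself flag as the crux but then resolve with the wrong witness. For a discriminating path $\pi=\langle x,q_1,\ldots,q_m,b,y\rangle$ the definition forces $q_m\rightarrow y$, $q_m$ adjacent to $b$, and $b$ adjacent to $y$, so $\{q_m,b,y\}$ is a triangle: it has three adjacencies, hence is \emph{never} in $\Tilde{\mathcal{S}}_3$, and moreover it is \emph{always} in $\mathcal{S}_3$ (adjacent pairs cannot be m-separated, so by Proposition \ref{msepaandS} no triangle is ever missing). Its membership therefore carries no information about whether $b$ is a collider, and your claim that ``this triple again has $1$ or $2$ adjacencies'' is false. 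The correct witness is the endpoint triple $\{x,b,y\}$: Proposition \ref{Prodiscriminatinggraph}(iii) shows $\{x,b,y\}\in\mathcal{S}(\mathcal{G})$ if and only if $b$ is a collider on $\pi$, using the facts that any $C$ with $x\perp_m y\mid C$ must contain every $q_i$ and omits $b$ exactly when $b$ is a collider (cited from Ali et al.), together with the m-separation characterization of Proposition \ref{msepaandS}; and since $b,y$ are adjacent while $x,y$ are not, this triple has $1$ or $2$ adjacencies and so does lie in $\Tilde{\mathcal{S}}_3$ whenever it is present. Note also that your ``small window near the $y$-end'' intuition points the wrong way: the witness must include the far endpoint $x$, because it is the nonadjacency of $x$ and $y$ that pins down the separating sets and makes the nonlocal collider status visible to the parametrizing set.
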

 
 The motivation for defining $\Tilde{\mathcal{S}}_{3}(\mathcal{G})$ is that we cannot obtain the same complexity if we allow triangles to be included, as in $\mathcal{S}_3$.  To see this, consider a complete bidirected graph with $e$ edges: this will require $O(e^3)$ operations to list all the triangles (which are all heads). 
  Note we do not care about triples with three or zero adjacencies. Theorem \ref{thm:1.4} tells us that apart from adjacencies between pairs of vertices, and unshielded triples which lack one adjacency, we only need to find that for a discriminating path $\pi = \langle x,q_{1},\ldots,q_{m}, b,y \rangle$, whether $b$ is a collider on the path or not. Later we will show that $\{x,b,y\} \in \mathcal{S}(\mathcal{G})$ if and only if $b$ is a collider on $\pi$, and note that $b,y$ are adjacent but $x,y$ are not. 
 \par

Corollary $\ref{cor:1.1.2}$ is particularly important for identifying Markov equivalence. It not only allows the algorithm to run in polynomial time as we only need to check heads with size at most 3, but also accelerates it further as we do not need to find triples with full adjacencies or no adjacencies, nor to store lots of triangles from the dense part of the graph.
\par
To prove Theorem \ref{thm:1.1} and Corollary \ref{cor:1.1.2}, we first prove the following propositions.
\begin{proposition}\label{msepaandS}
Let $\mathcal{G}$ be a MAG with vertex set $\mathcal{V}$. For a set $W \subseteq \mathcal{V}$, $W \notin \mathcal{S}(\mathcal{G})$ if and only if there are two vertices $a,b$ in $W$ such that we can m-separate them by a set $C$ such that $a,b \notin C$ with $W \subseteq C \cup \{a,b\}$.
\end{proposition}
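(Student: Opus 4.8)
The plan is to reduce membership in $\mathcal{S}(\mathcal{G})$ to two local conditions on $W$, and then run each direction through the district structure of $\mathcal{G}_{\an(W)}$ together with the standard equivalence between m-separation and separation in the augmented (moralized) graph. First I would establish the structural facts that $\an_{\mathcal{G}}(W)=\an_{\mathcal{G}}(\barren_{\mathcal{G}}(W))$ (every $w\in W$ is an ancestor of some barren vertex of $W$) and that, writing $H=\barren_{\mathcal{G}}(W)$, one has $W\in\mathcal{S}(\mathcal{G})$ if and only if $H$ is a head and $W\setminus H\subseteq\tail(H)$. The only real content here is that whenever $W=H'\cup A'$ with $H'$ a head and $A'\subseteq\tail(H')$, in fact $H'=\barren_{\mathcal{G}}(W)$; this follows from acyclicity, $\barren_{\mathcal{G}}(H')=H'$, and $\tail(H')\subseteq\an_{\mathcal{G}}(H')$, since then no vertex of $H'$ has a descendant in $W$ other than itself whereas every vertex of $A'$ does. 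Fix $H=\barren_{\mathcal{G}}(W)$ and $D=\dis_{\an(H)}(H)$ for the rest.

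\textbf{The ``if'' part (contrapositive).} Assume $W\in\mathcal{S}(\mathcal{G})$, so $W=H\cup A$ with $A\subseteq\tail(H)=(D\setminus H)\cup\pa_{\mathcal{G}}(D)$; fix distinct $a,b\in W$ and a set $C$ with $W\setminus\{a,b\}\subseteq C$ and $\{a,b\}\cap C=\emptyset$. It suffices to produce a \emph{collider path} between $a$ and $b$ all of whose vertices lie in $\an_{\mathcal{G}}(\{a,b\}\cup C)$: such a path makes $a$ and $b$ adjacent in the augmented graph $(\mathcal{G}_{\an(\{a,b\}\cup C)})^{a}$, so $C$ cannot separate them there and hence $a\not\perp_m b\mid C$. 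Since $D\subseteq\an(H)=\an(W)\subseteq\an(\{a,b\}\cup C)$, it is enough to keep the path inside $D\cup\{a,b\}$, which I would do by three cases: if $a,b\in D$, use a bidirected path between them inside $D$ (it exists since $D$ is a single district); if $a\in D$ and $b\in\pa_{\mathcal{G}}(D)\setminus D$ with $b\to d$, $d\in D$, splice a bidirected path in $D$ from $a$ to $d$ onto the edge $b\to d$, which makes $d$ a collider; if $a,b\in\pa_{\mathcal{G}}(D)\setminus D$ with $a\to d_a$ and $b\to d_b$, use $a\rqarrow d_a\leftrightarrow\cdots\leftrightarrow d_b\lqarrow b$. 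In every case all interior vertices are colliders lying in $D$, which finishes this direction.

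\textbf{The ``only if'' part.} Suppose $W\notin\mathcal{S}(\mathcal{G})$; by the reduction either (A) $H$ is not a head, or (B) $W\setminus H\not\subseteq\tail(H)$. Since $\barren_{\mathcal{G}}(H)=H$ always holds, in case (A) $H$ cannot lie in one district of $\mathcal{G}_{\an(H)}$, so I pick $a,b\in H$ in different districts of $\mathcal{G}_{\an(H)}$; in case (B) I pick $a\in(W\setminus H)\setminus\tail(H)$ and any $b\in H$, so that $a\in\an(H)\setminus D$ and $a$ has no child in $D$. In both cases I would show there is no collider path between $a$ and $b$ inside $\mathcal{G}_{\an(H)}$ and that they are nonadjacent there: on a hypothetical collider path $a\rqarrow q_1\leftrightarrow\cdots\leftrightarrow q_m\lqarrow b$ (the case $m=0$ being an $a$--$b$ edge), acyclicity and $\barren_{\mathcal{G}}(H)=H$ rule out a tail at the $b$-end (an edge $b\to q_m$ would give $b$ a descendant in $H$ other than itself), so $q_m\leftrightarrow b$; in case (A) the symmetric argument gives $a\leftrightarrow q_1$, yielding a bidirected path between $a$ and $b$ in $\mathcal{G}_{\an(H)}$ and hence putting them in one district, a contradiction; in case (B), since $b\in H$ and $H$ is a head, $q_m$ and so the whole chain $q_1,\dots,q_m$ lie in $D$, whereupon neither $a\to q_1$ nor $a\leftrightarrow q_1$ is possible for our chosen $a$. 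Taking $C=\an_{\mathcal{G}}(W)\setminus\{a,b\}$ we have $W\setminus\{a,b\}\subseteq C$, $\{a,b\}\cap C=\emptyset$ and $\an(\{a,b\}\cup C)=\an(W)=\an(H)$; since $a$ and $b$ are nonadjacent in $(\mathcal{G}_{\an(H)})^{a}$, the set $C$ separates them there, i.e.\ $a\perp_m b\mid C$.

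\textbf{Main obstacle.} The hard part is the ``if'' direction. The tempting but wrong move is to hunt for a single m-connecting path that serves every admissible $C$ at once; such a path would have to be a collider path whose colliders all lie in $\an_{\mathcal{G}}(W\setminus\{a,b\})$, and no such path need exist. What actually works is to pass to the augmented graph, where $D$ is a clique and every vertex of $A\cap\pa_{\mathcal{G}}(D)$ is joined to all of $D$, so $a$ and $b$ are themselves adjacent there and can never be separated. The points to be careful about are that the three spliced constructions really are collider paths (the edges from $\pa_{\mathcal{G}}(D)$ into $D$ create the needed colliders at the splice vertices) and that all vertices used stay inside $\an_{\mathcal{G}}(\{a,b\}\cup C)$, for which the identity $\an_{\mathcal{G}}(W)=\an_{\mathcal{G}}(H)$ is exactly what is needed.
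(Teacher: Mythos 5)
Your proof is correct, but it takes a genuinely different route from the paper's. For the direction ``$W\in\mathcal{S}(\mathcal{G})$ implies no admissible separation,'' the paper argues directly with m-connecting paths: starting from the collider path supplied by the head/tail structure it repairs the path for each given $C$, detouring along directed paths into $a$ or $b$ whenever an intermediate collider fails to be an ancestor of $C$. You instead invoke the standard equivalence of m-separation with separation in the augmented graph $(\mathcal{G}_{\an(\{a,b\}\cup C)})^{a}$, so a single collider path inside $\an(W)=\an(H)$ (which you construct explicitly in three cases from the district $D$) makes $a$ and $b$ adjacent there and settles all admissible $C$ at once. For the converse, the paper establishes that $\barren(W)$ is a head by essentially the same district argument you give, but disposes of the case $W\setminus\barren(W)\not\subseteq\tail(\barren(W))$ by citing Remark 4.14 of \citet{Evans2014}, whereas you treat both failure modes uniformly by showing $a$ and $b$ are not collider-connected in $\mathcal{G}_{\an(H)}$ and conditioning on $C=\an_{\mathcal{G}}(W)\setminus\{a,b\}$. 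The paper's route stays within the m-connecting-path definition (no moralization machinery) at the cost of the path-surgery case analysis and the external factorization result; your route gains symmetry between the two directions and a conditioning set that manifestly contains $W\setminus\{a,b\}$ --- the paper's set $\an_{\mathcal{G}}(W')\setminus W'$ in the head-verification step needs the small additional remark that adjoining the remaining vertices of $W'$ preserves the separation, a point your choice of $C$ sidesteps. Your explicit preliminary reduction ($W\in\mathcal{S}(\mathcal{G})$ iff $\barren_{\mathcal{G}}(W)$ is a head and $W\setminus\barren_{\mathcal{G}}(W)\subseteq\tail(\barren_{\mathcal{G}}(W))$), used only implicitly in the paper, is correctly justified by your acyclicity argument.
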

\begin{proof}
We prove an equivalent statement of this proposition, that is: $W \in \mathcal{S}(\mathcal{G})$ if and only if for any two vertices $a,b$ in $W$ we cannot m-separate them by a set $C$ such that $a,b \notin C$ with $W \subseteq C \cup \{a,b\}$.
\par
To prove $\Rightarrow$: if $W \in \mathcal{S}(\mathcal{G})$, then there is a nonempty subset $W' \subseteq W$ such that $W'$ is a head and $W \subseteq W' \cup \tail(W')$. Because $\tail(W') \subseteq \an(W')$, we have $W' \cup \tail(W') \subseteq \an(W')$. By definition of the heads and tails, any two vertices $a,b$ in $W \subseteq W' \cup \tail(W')$ are connected by a collider path where all the colliders are in $\an(W') \subseteq \an(C \cup \{a,b\})$. Let $d_{i}$, 1$\leq i \leq n$ be intermediate vertices in the path. Now if all of $d_{i}$ are ancestors of $C$ then this path m-connects $a$ and $b$. So some of $d_{i}$ are only ancestors of ${a,b}$. 
\par
Suppose there exists some $d_{i} \in \an_\mathcal{G}(a) \setminus \an_{\mathcal{G}}(C)$, let $d_{j}$ be the furthest one on path $\pi$ from $a$, so there exists a directed path $\pi':a\leftarrow\cdots\leftarrow d_{j}$ such that none of vertices in $\pi'$ after $a$ is an ancestor of $C$ and hence not in $C$. If all  $d_{k}$ after $d_{j}$ belong to $\an_{\mathcal{G}}(C)$ then we find a m-connecting path between $a$ and $b$: $a \leftarrow \cdots \leftarrow d_{j} \leftrightarrow \cdots \leftrightarrow d_{n} \lqarrow b$. If not, let $d_{m}$ be the first one after $d_{j}$ such that $d_{m} \in \an_{\mathcal{G}}(b) \setminus \an_{\mathcal{G}}(C)$ then again we find a m-connecting path between $a$ and $b$: $a \leftarrow \cdots \leftarrow d_{j} \leftrightarrow \cdots \leftrightarrow d_{m} \rightarrow \cdots \rightarrow b$.
\par
If all $d_{i} \notin \an_{\mathcal{G}}(C)$ are ancestors of $b$ then let $d_{j}$ be the closest one to $a$ in path $\pi$ which also leads to a m-connecting path between $a$ and $b$: $a \rqarrow d_{1} \leftrightarrow \cdots \leftrightarrow d_{j} \rightarrow \cdots \rightarrow b$. Hence in all cases any $a,b$ in $W$ are not m-separated given any $C \supseteq W \setminus \{a,b\}$. 
\par
To prove $\Leftarrow$: define $W'$ = $\barren(W)$. We claim that it is a head. Suppose it is not a head, by the definitions of a barren set and a head, $W'$ does not lie in a single district in $\mathcal{G}_{\an(W')}$. Let $D_{i} \subset W'$ index bidirected-connected components of $W'$ in $\an_{\mathcal{G}}(W')$ where $1 \leq i \leq m$. Clearly by assumption $m >1$, and now consider $D_{1}$ and $D_{2}$. For any edge in $\mathcal{G}_{\an(W')}$ which has an endpoint $a \in W'$, it is of the form $a \lqarrow$ by definition of a barren set, so if there is a collider path between $D_{1}$ and $D_{2}$, it would be a bidirected path which is a contradiction to the definition of $D_{1}$ and $D_{2}$. This means that any path in $\an_{\mathcal{G}}(W')$ between $D_{1}$ and $D_{2}$ contains at least one non-collider which is not in $W'$ and hence it is in $\an_{\mathcal{G}}(W') \setminus W'$. Thus for any two vertices in $D_{1}$ and $D_{2}$, given $\an_{\mathcal{G}}(W') \setminus W'$, they are m-separated in $\an_{\mathcal{G}}(W')$. Since $\an_{\mathcal{G}}(W')$ is ancestral, the m-separation also holds in the whole graph. Thus $W'$ is a head.
\par
By Remark 4.14 in \citet{Evans2014}, for any head $H$ we have $H \perp_m \an_{\mathcal{G}}(H) \setminus (H \cup \tail(H)) \mid \tail(H)$. Thus if $(W \setminus W')$ is not in $\tail(W')$, we can m-separate a vertex in $(W \setminus W') \setminus \tail(W')$ and a vertex in $W'$ given the remaining vertices in $\an_{\mathcal{G}}(W')$, which is a contradiction.
\end{proof}

\begin{proposition}\label{Prodiscriminatinggraph}
For a MAG $\mathcal{G}$, we have (i) any two vertices $a$ and $b$ are adjacent in $\mathcal{G}$ if and only if $\{a,b\} \in \mathcal{S}(\mathcal{G})$; (ii) for any unshielded triple $(a,b,c)$ in $\mathcal{G}$, $\{a,b,c\} \in \mathcal{S}(\mathcal{G})$ if and only if $b$ is a collider on the triple $(a,b,c)$; (iii) if $\pi$ forms a discriminating path for $b$ with two end vertices $x$ and $y$ in $\mathcal{G}$ then $\{x,b,y\} \in \mathcal{S}(\mathcal{G})$ if and only if $b$ is a collider on the path $\pi$.
\end{proposition}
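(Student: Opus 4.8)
The plan is to derive all three parts from Proposition~\ref{msepaandS} together with maximality of $\mathcal{G}$. Recall that adjacent vertices are never m-separated (the connecting edge is itself an m-connecting path, having no nonendpoints to block), while in a maximal graph two non-adjacent vertices are always m-separated by some set. Part~(i) is then immediate: by Proposition~\ref{msepaandS}, $\{a,b\}\notin\mathcal{S}(\mathcal{G})$ iff $a$ and $b$ can be m-separated by some $C$ (the side condition $\{a,b\}\subseteq C\cup\{a,b\}$ being automatic), i.e.\ iff $a$ and $b$ are non-adjacent. For part~(ii), the only non-adjacent pair in an unshielded triple $(a,b,c)$ is $\{a,c\}$, so Proposition~\ref{msepaandS} gives $\{a,b,c\}\notin\mathcal{S}(\mathcal{G})$ iff $a$ and $c$ can be m-separated by some $C$ with $b\in C$. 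If $b$ is a collider on $\langle a,b,c\rangle$, then for every such $C$ the path $\langle a,b,c\rangle$ is m-connecting (its only nonendpoint $b$ is a collider and $b\in C\subseteq\an(C)$), so no such $C$ exists and $\{a,b,c\}\in\mathcal{S}(\mathcal{G})$; if $b$ is a noncollider, then every set m-separating $a$ and $c$ must block $\langle a,b,c\rangle$, hence must contain the noncollider $b$, and maximality supplies at least one such set, so $\{a,b,c\}\notin\mathcal{S}(\mathcal{G})$.

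Part~(iii) is the substantial case, handled in the same spirit using the structure of $\pi=\langle x,q_1,\dots,q_m,b,y\rangle$. I would first record the orientation facts read off the definition: each $q_i$ is a collider on $\pi$ (both incident edges on $\pi$ carry an arrowhead at $q_i$, whatever the marks left unspecified by $\rqarrow,\lqarrow$); the edges $q_i\rightarrow y$ are present for every $i$; $x,y$ are non-adjacent while $b,y$ are adjacent; and $b$ is a collider on $\pi$ exactly when $q_m\leftrightarrow b$ and $b\leftrightarrow y$, while in every noncollider configuration $b\rightarrow y$ (if instead $b\rightarrow q_m$, then $b\in\an(y)$ because $q_m\rightarrow y$, so $b\leftrightarrow y$ would contradict the ancestral property). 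The engine of the argument is the claim: \emph{if $x\perp_m y\mid C$ with $x,y\notin C$, then $q_1,\dots,q_m\in C$}. This is proved by induction on $i$ using the paths $\pi_i=\langle x,q_1,\dots,q_i,y\rangle$, which are genuine paths of $\mathcal{G}$ thanks to the edges $q_i\rightarrow y$: on $\pi_i$ the vertices $q_1,\dots,q_{i-1}$ are colliders and, by the inductive hypothesis, already lie in $C\subseteq\an(C)$, so the only nonendpoint of $\pi_i$ that can block it is $q_i$, which is a noncollider; hence $q_i\in C$.

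With the claim, both directions of part~(iii) follow by applying Proposition~\ref{msepaandS} to $\{x,b,y\}$, whose only pairs that may fail to be adjacent are $\{x,y\}$ and $\{x,b\}$. Suppose $b$ is a collider on $\pi$. For any $C$ with $x\perp_m y\mid C$ and $x,y\notin C$, the claim puts $q_1,\dots,q_m$ in $\an(C)$; since all nonendpoints of $\pi$ are then colliders, blocking $\pi$ forces $b\notin\an(C)$, hence $b\notin C$, so no set m-separating $x$ and $y$ contains $b$. Also, no set $C$ with $y\in C$ m-separates $x$ and $b$, since the path $x\rqarrow q_1\leftrightarrow\dots\leftrightarrow q_m\leftrightarrow b$ has only colliders as nonendpoints and each $q_i\rightarrow y\in C$ gives $q_i\in\an(C)$, making it m-connecting. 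As $\{b,y\}$ is adjacent, no pair furnishes a witness for Proposition~\ref{msepaandS}, so $\{x,b,y\}\in\mathcal{S}(\mathcal{G})$. Suppose instead $b$ is a noncollider on $\pi$, and take any $C$ m-separating $x$ and $y$ (one exists by maximality). The claim gives $q_1,\dots,q_m\in C\subseteq\an(C)$, so, since the only nonendpoint of $\pi$ that is a noncollider is $b$, blocking $\pi$ forces $b\in C$; then the pair $\{x,y\}$ with $b\in C$ is exactly the witness required by Proposition~\ref{msepaandS}, so $\{x,b,y\}\notin\mathcal{S}(\mathcal{G})$.

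The main obstacle is not a single deep step but the bookkeeping of edge marks. One must check that each auxiliary path ($\pi_i$, the main path $\pi$, and $x\rqarrow q_1\leftrightarrow\dots\leftrightarrow q_m\leftrightarrow b$) is genuinely a path of $\mathcal{G}$, and classify each of its internal vertices as collider or noncollider while making sure the classification does not depend on the marks left open by $\rqarrow$ and $\lqarrow$. Verifying that the case split for $b$ on $\pi$ is exhaustive, and invoking the ancestral property exactly where it is needed to force $b\rightarrow y$ in the noncollider case, is where most of the care goes.
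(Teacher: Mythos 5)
Your proposal is correct and follows essentially the same route as the paper: all three parts are reduced to Proposition~\ref{msepaandS} together with maximality, and part (iii) hinges on the same facts about discriminating paths (every set m-separating $x$ and $y$ contains all the $q_i$, and contains $b$ exactly when $b$ is a noncollider). The only difference is that you prove these facts directly by the induction on the subpaths $\langle x,q_1,\dots,q_i,y\rangle$, whereas the paper simply cites \citet{ali2009} for them, so your write-up is a more self-contained version of the same argument.
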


\begin{proof}
For (i), by maximality, any two vertices $a$ and $b$ are adjacent in a MAG if and only if we can not m-separate them by a set $C$, hence by Proposition \ref{msepaandS} if and only if $\{a,b\} \in \mathcal{S}(\mathcal{G})$.
\par
For (ii), the only nonadjacent pair of vertices are $a,c$, for any set $C$ that m-seperates them, $b \notin C$ if and only if $b$ is a collider on the triple $(a,b,c)$, hence by Proposition \ref{msepaandS} if and only if $\{a,b,c\} \in \mathcal{S}(\mathcal{G})$.
\par
For (iii), if $x,b$ are not adjacent, then for any set that m-separates them, $y$ is not in the set, as the path $x \rqarrow q_{1} \leftrightarrow \cdots  \leftrightarrow q_{m} \lqarrow b$ would be m-connecting $x$ and $b$. Since $x,y$ are not adjacent, there exists some set $C$ such that $x \perp_m y \mid C$. From page 11 in \citet{ali2009}, we know that for any such $C$, $q_{i} \in C$ for all $i \leq n$ and $b$ is a collider if and only if $b \notin C$, hence by Proposition \ref{msepaandS} if and only if $\{x,b,y\} \in \mathcal{S}(\mathcal{G})$.
\end{proof}

Now we are able to prove Theorem $\ref{thm:1.1}$ and Corollary $\ref{cor:1.1.2}$

\begin{proof}[Proof of Theorem \ref{thm:1.1}]
($\Rightarrow$) Proposition \ref{msepaandS} ensures that missing sets in $\mathcal{S}(\mathcal{G})$ are only due to m-separations in graphs. But as Markov equivalence is characterized by m-separations, $\mathcal{S}(\mathcal{G}_{1})$ and $\mathcal{S}(\mathcal{G}_{2})$ in two equivalent MAGs $\mathcal{G}_{1}$ and $\mathcal{G}_{2}$ are the same. ($\Leftarrow$) Proposition \ref{Prodiscriminatinggraph} implies that any violation of conditions in Theorem \ref{thm:1.4} result in different $\mathcal{S}(\mathcal{G}_{1})$ and $\mathcal{S}(\mathcal{G}_{2})$. Hence if $\mathcal{S}(\mathcal{G}_{1})=\mathcal{S}(\mathcal{G}_{2})$, $\mathcal{G}_{1}$ is Markov equivalent to $\mathcal{G}_{2}$.
\end{proof}

\begin{proof}[Proof of Corollary \ref{cor:1.1.2}]
($\Rightarrow$) This follows from Theorem \ref{thm:1.1} and the fact that Markov equivalent MAGs have the same adjacencies. ($\Leftarrow$) This follows from the fact that in the proof the `if' part of Theorem \ref{thm:1.1}, 
we only consider sets in $\Tilde{\mathcal{S}}_{3}(\mathcal{G}_{1})$ and $\Tilde{\mathcal{S}_{3}}(\mathcal{G}_{2})$.
\end{proof}
\citet{frydenberg1990chain} 
gives conditions for when two DAGs are equivalent, i.e.\ if and only if they have the same adjacencies and unshielded colliders. DAGs are a subclass of MAGs so Corollary \ref{cor:1.1.2} also applies to them.  When $\mathcal{G}$ is just a DAG, $\Tilde{\mathcal{S}}_{3}(\mathcal{G})$ (and indeed $\mathcal{S}_3(\mathcal{G})$) contains the exact information of $\mathcal{G}$'s adjacencies and unshielded colliders. 
By Proposition \ref{Prodiscriminatinggraph}, $\{a,b\} \in \Tilde{\mathcal{S}}_{3}(\mathcal{G})$ if and only if $a, b$ are adjacent. And a triple is in $\Tilde{\mathcal{S}}_{3}(\mathcal{G})$ if and only if it is an unshielded collider; this is because in DAGs, heads are precisely the individual vertices, and the corresponding tails are their  parent sets. 

\subsection{Projection From ADMGs To MAGs}\label{OMESG}

\citet{richardson2002} give a projection that projects a DAG $\mathcal{G}$ with latent variables $L$ to a Markov equivalent MAG $\mathcal{G}^{m}$: (i) every pair of vertices $a,b \in \mathcal{V}$ in $\mathcal{G}$ that are connected by an \emph{inducing path} becomes adjacent in $\mathcal{G}^{m}$; (ii) an edge connecting $a,b$ in $\mathcal{G}^{m}$ is oriented as follows: if $a \in \an_{\mathcal{G}}(b)$ then $a \rightarrow b$; if $b \in \an_{\mathcal{G}}(a)$ then $b \rightarrow a$; if neither is the case, then $a \leftrightarrow b$. An \emph{inducing path} between $a,b$ is a path such that every collider in the path is in $\an(\{a,b\})$, and every noncollider is in $L$. Note if we already have an ADMG $\mathcal{G}$, we can apply the projection to $\mathcal{G}$ with no latent variable to construct the corresponding $\mathcal{G}^{m}$, so an inducing path in this case is just a collider path with every collider in $\an(\{a,b\})$. In addition, the projection preserves ancestral relations from the original graph. 
\par
To extend previous theorems to $\mathcal{G}$ we need following lemmas to link $\mathcal{G}$ and $\mathcal{G}^{m}$. 

\begin{lemma}\label{lem:2.1}
If $v,w$ are connected by a collider path $\pi_{1}$ in an ADMG $\mathcal{G}$ then they are connected by a collider path $\pi_{2}$ in $\mathcal{G}^{m}$ where $\pi_{2}$ uses a subset of the internal vertices of $\pi_{1}$. Also, if $\pi_{1}$ starts with $v \rightarrow$, so does $\pi_{2}$.
\end{lemma}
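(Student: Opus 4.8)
The plan is to reduce everything to the case where $\pi_1$ is an inducing path, via a rerouting argument, using the number of internal vertices as a termination/induction measure. First I would record the rigid shape of a collider path in an ADMG: writing $\pi_1 = \langle v = u_0, u_1, \dots, u_{k-1}, u_k = w\rangle$, each nonendpoint $u_i$ carries an arrowhead on both of its incident edges, so every edge joining two consecutive nonendpoints must be bidirected, $u_i \leftrightarrow u_{i+1}$, while the two terminal edges point into the path ($v \rqarrow u_1$ and $u_{k-1} \lqarrow w$), with a tail at $v$ exactly when $\pi_1$ starts with $v \rightarrow u_1$. Throughout I would use three standing facts: (a) a path of $\mathcal{G}$ that is an inducing path --- here, with no latents, a collider path all of whose colliders lie in $\an_{\mathcal{G}}$ of its two endpoints --- makes those endpoints adjacent in $\mathcal{G}^m$; (b) the projection preserves ancestry, so $\an_{\mathcal{G}^m} = \an_{\mathcal{G}}$ on $\mathcal{V}$; (c) by (b) and acyclicity of $\mathcal{G}$, the orientation rule ``$a \in \an_{\mathcal{G}}(b) \Rightarrow a \rightarrow b$ in $\mathcal{G}^m$'' is unambiguous.

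The easy case handles both the base ($k=1$) and the situation where $\pi_1$ is already an inducing path. A single edge between $v$ and $w$ is a (vacuously) inducing path, so $v,w$ are adjacent in $\mathcal{G}^m$; and if $\pi_1 = v \rightarrow w$ then $v \in \an_{\mathcal{G}}(w)$, so that edge is $v \rightarrow w$ in $\mathcal{G}^m$. More generally, if every nonendpoint of $\pi_1$ lies in $\an_{\mathcal{G}}(\{v,w\})$ then $\pi_1$ is itself an inducing path, $v,w$ are adjacent in $\mathcal{G}^m$, and $\pi_2 = \langle v,w\rangle$ works; and if $\pi_1$ starts $v \rightarrow u_1$ then $u_1 \notin \an_{\mathcal{G}}(v)$ by acyclicity, hence $u_1 \in \an_{\mathcal{G}}(w)$, which gives a directed path $v \rightarrow u_1 \rightarrow \cdots \rightarrow w$ and therefore $v \rightarrow w$ in $\mathcal{G}^m$.

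In the remaining case some nonendpoint of $\pi_1$ lies outside $\an_{\mathcal{G}}(\{v,w\})$. The idea is to first carry $\pi_1$ over to $\mathcal{G}^m$ along the consecutive adjacencies (each consecutive $\mathcal{G}$-edge is a trivial inducing path), and then reroute wherever the collider structure has been broken. The collider structure at $u_i$ can fail only if one of its incident bidirected edges, say $u_i \leftrightarrow u_{i+1}$, has turned into $u_i \rightarrow u_{i+1}$ in $\mathcal{G}^m$ --- equivalently $u_i \in \an_{\mathcal{G}}(u_{i+1})$ (symmetrically on the left edge and $u_{i-1}$). But then the sole collider $u_i$ of the subpath $\langle u_{i-1}, u_i, u_{i+1}\rangle$ is an ancestor of an endpoint of that subpath, so $\langle u_{i-1}, u_i, u_{i+1}\rangle$ is an inducing path in $\mathcal{G}$, $u_{i-1}$ and $u_{i+1}$ are adjacent in $\mathcal{G}^m$, and $u_i$ may be deleted; the same ancestral relation pins down the orientation of the new edge $u_{i-1}$--$u_{i+1}$, which then feeds into the collider check at $u_{i-1}$ or $u_{i+1}$. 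Iterating this deletion (the internal vertex count strictly drops, so it terminates) produces a path $\pi_2$ of $\mathcal{G}^m$ on a subset of $\{u_1,\dots,u_{k-1}\}$ along which every edge that was internal-and-bidirected in $\mathcal{G}$ has stayed bidirected, i.e.\ a collider path. A deletion can remove $u_1$ only when $u_1 \in \an_{\mathcal{G}}(v)$, which is impossible if $\pi_1$ starts $v \rightarrow u_1$; so in that case the first edge $v \rightarrow u_1$ survives every step and $\pi_2$ still starts with $v \rightarrow$. (One can equivalently recast this as an induction: if the first edge survives to $\mathcal{G}^m$, apply the hypothesis to $\langle u_1,\dots,w\rangle$; otherwise $u_1 \in \an_{\mathcal{G}}(v)$ and the inducing path $\langle v,u_1,u_2\rangle$ lets $\pi_2$ begin with the edge $v$--$u_2$.)

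The step I expect to be delicate --- the main obstacle --- is making this rerouting fully rigorous: one must show the deletion process cannot stall with some surviving noncollider, and that the new adjacencies it introduces never create a fresh noncollider that is not itself removable on the next pass. Doing this cleanly essentially forces one to strengthen the statement being proved so that it also controls the edge marks at the endpoints of the output path (so that a recursively produced subpath can be guaranteed to meet the splice vertex with an arrowhead); the orientation bookkeeping at the $v$-end, together with these splice conditions, is exactly where acyclicity of $\mathcal{G}$ and the ancestor-preservation of the projection get used repeatedly.
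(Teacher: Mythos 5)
Your overall strategy is the same as the paper's: carry $\pi_1$ over to $\mathcal{G}^m$ edge by edge and repeatedly delete any internal vertex whose collider status fails, using the fact that the offending three-vertex subpath is an inducing path, so its endpoints become adjacent in $\mathcal{G}^m$; termination is by the decreasing vertex count. However, two steps do not hold up as written. First, the claim that a deletion can remove $u_1$ only when $u_1 \in \an_{\mathcal{G}}(v)$, and hence that ``the first edge $v \rightarrow u_1$ survives every step,'' is false: $u_1$ becomes a noncollider, and must be removed, as soon as $u_1 \in \an_{\mathcal{G}}(u_2)$, which is perfectly compatible with $\pi_1$ starting $v \rightarrow u_1$ (take $v \rightarrow u_1 \leftrightarrow u_2 \leftrightarrow w$ together with a directed path $u_1 \rightarrow c \rightarrow u_2$; note $\mathcal{G}$ is only assumed to be an ADMG, not ancestral). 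The endpoint claim is still true, but for the reason the paper implicitly uses: since $v \rightarrow u_1$ forces $v \in \an_{\mathcal{G}}(u_1)$, the edge $v$--$u_1$ keeps its arrowhead at $u_1$, so the only way $u_1$ can fail is $u_1 \in \an_{\mathcal{G}}(u_2)$, and then the replacement edge satisfies $v \in \an_{\mathcal{G}}(u_1) \subseteq \an_{\mathcal{G}}(u_2)$, hence is $v \rightarrow u_2$ in $\mathcal{G}^m$; thus the invariant ``the current path begins with a tail at $v$'' is preserved, rather than the first edge itself. Your parenthetical induction has the same conflation, and in addition needs exactly the strengthened statement about the mark at the splice vertex that you mention but do not supply.

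Second, the step you yourself flag as ``the main obstacle'' is a genuine gap, not a finishing touch: after one deletion the working path contains an edge of $\mathcal{G}^m$ that need not be an edge of $\mathcal{G}$, and if that new edge comes out directed, the resulting fresh noncollider cannot be removed by your stated rule, because the justification ``the three consecutive vertices form an inducing path in $\mathcal{G}$'' presupposes that both incident edges are edges of $\mathcal{G}$. To close this one should maintain an invariant along the deletions, for instance: every edge $x$--$y$ of the working path is witnessed in $\mathcal{G}$ by a collider path from $x$ to $y$ whose internal vertices all lie in $\an_{\mathcal{G}}(\{x,y\})$ and which carries arrowheads at $x$ and $y$ (except possibly a tail at $v$ when $\pi_1$ starts $v\rightarrow$). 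When an internal vertex $y$ between $x$ and $z$ fails because, say, $y \in \an_{\mathcal{G}}(z)$, concatenating the two witnessing paths gives a collider path from $x$ to $z$ with all colliders in $\an_{\mathcal{G}}(\{x,z\})$, i.e.\ an inducing path, so $x,z$ are adjacent in $\mathcal{G}^m$ and the concatenation serves as the witness for the new edge; this is what guarantees the process never stalls. The paper's proof is terse on this bookkeeping but follows the same deletion scheme; your write-up identifies the crux correctly yet leaves it unresolved, so as it stands it is an outline rather than a complete proof.
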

Lemma \ref{lem:2.1} is in analogue to Lemma 23 in \citet{Shpitser2018AcyclicLS}. Now we show heads and tails are preserved through the projection.

\begin{proposition}\label{lem:2.7}
If $\mathcal{G}$ is an ADMG,  $\mathcal{H}(\mathcal{G})=\mathcal{H}(\mathcal{G}^{m})$ and for every $H \in \mathcal{H}(\mathcal{G})$, $\tail_{\mathcal{G}}(H)=\tail_{\mathcal{G}^{m}}(H)$.
\end{proposition}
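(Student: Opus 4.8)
The plan is to treat heads and tails separately, in both cases reducing everything to a single statement about collider paths in induced subgraphs, which is where Lemma \ref{lem:2.1} does the work. Throughout I would use the fact (from the construction of the projection) that ancestral relations are unchanged, i.e.\ $\an_{\mathcal{G}}(v)=\an_{\mathcal{G}^{m}}(v)$ for every $v$. An immediate consequence is $\de_{\mathcal{G}}=\de_{\mathcal{G}^{m}}$ and hence $\barren_{\mathcal{G}}=\barren_{\mathcal{G}^{m}}$, so condition (i) in the definition of a head agrees for the two graphs, and for any $H$ the induced graphs $\mathcal{G}_{\an(H)}$ and $(\mathcal{G}^{m})_{\an(H)}$ have the same (ancestrally closed) vertex set $A:=\an_{\mathcal{G}}(H)$.

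The technical core I would isolate as a lemma: \emph{for an ancestrally closed set $A$ and $u,v\in A$, the vertices $u$ and $v$ are joined by a collider path inside $\mathcal{G}_{A}$ iff they are joined by a collider path inside $(\mathcal{G}^{m})_{A}$.} For this I would first check $(\mathcal{G}^{m})_{A}=(\mathcal{G}_{A})^{m}$: an inducing path in $\mathcal{G}$ between two vertices of $A$ has all its internal (collider) vertices in $\an_{\mathcal{G}}(\{u,v\})\subseteq A$, so it already lies in $\mathcal{G}_{A}$, and the edge orientations at both ends are read off from $\an_{\mathcal{G}}$ restricted to $A$, which equals $\an_{\mathcal{G}_{A}}$. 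The forward direction of the lemma is then exactly Lemma \ref{lem:2.1} applied to the ADMG $\mathcal{G}_{A}$. For the backward direction I would replace each edge of the given collider path in $(\mathcal{G}_{A})^{m}$ by a realizing inducing path in $\mathcal{G}_{A}$ — each of these is a collider path with colliders in $A$, and it meets its endpoint with an arrowhead exactly when the MAG edge does there — concatenate them at the (collider) junction vertices to obtain a collider walk in $\mathcal{G}_{A}$, and reduce this walk to a collider path by the standard shortcutting argument, which keeps all intermediate vertices in $A$.

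With this lemma both halves of the proposition become short bookkeeping. For heads: a collider path between two vertices of a barren set $H$ whose colliders all lie in $\an(H)$ is forced to be a bidirected path — its two end edges cannot point into the path (this would, through $\an(H)$ and acyclicity of $\mathcal{G}$, produce a directed cycle) and its interior edges join two colliders — so $H$ lies in a single district of $\mathcal{G}_{A}$ iff every pair of vertices of $H$ is collider-connected inside $\mathcal{G}_{A}$; by the lemma this transfers to $\mathcal{G}^{m}$, giving $\mathcal{H}(\mathcal{G})=\mathcal{H}(\mathcal{G}^{m})$. For tails: unfolding $\tail_{\mathcal{G}}(H)=(\dis_{\mathcal{G}_{A}}(H)\setminus H)\cup\pa_{\mathcal{G}}(\dis_{\mathcal{G}_{A}}(H))$ and using the same forced-bidirected observation, I would prove that $v\in H\cup\tail_{\mathcal{G}}(H)$ iff $v\in A$ and $v$ is joined to some vertex of $H$ by a collider path inside $\mathcal{G}_{A}$ (if that path meets $v$ with an arrowhead then $v\in\dis_{\mathcal{G}_{A}}(H)$; if with a tail, the second vertex of the path lies in $\dis_{\mathcal{G}_{A}}(H)$ and $v$ is its parent). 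This characterization is symmetric in $\mathcal{G}$ and $\mathcal{G}^{m}$ by the lemma, so $H\cup\tail_{\mathcal{G}}(H)=H\cup\tail_{\mathcal{G}^{m}}(H)$; and since $\tail(H)\cap H=\emptyset$ in either graph (a parent of a vertex of $\dis_{\mathcal{G}_{A}}(H)\subseteq\an(H)$ cannot lie in the barren set $H$, again by acyclicity), this yields $\tail_{\mathcal{G}}(H)=\tail_{\mathcal{G}^{m}}(H)$.

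I expect the backward direction of the core lemma — turning a collider path of the MAG back into one of $\mathcal{G}$ by splicing in inducing paths and reducing the resulting walk — to be the main obstacle: one must check that the splice points remain colliders (this is exactly where the arrowhead-at-endpoint property of the realizing inducing paths is used) and that the walk-to-path reduction can be carried out without leaving $A$ or breaking the collider structure. The tail characterization in the last paragraph is the other place where care is needed, since the district $\dis_{\mathcal{G}_{A}}(H)$ itself can shrink under the projection even though $\dis_{\mathcal{G}_{A}}(H)\cup\pa(\dis_{\mathcal{G}_{A}}(H))$ does not.
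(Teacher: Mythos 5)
Your proposal is correct and follows essentially the same route as the paper's proof: the forward direction rests on Lemma \ref{lem:2.1}, the backward direction on replacing each edge of $\mathcal{G}^{m}$ by a realizing inducing path in $\mathcal{G}$, with preservation of ancestral relations plus barrenness forcing the relevant paths to be bidirected; your version merely packages this more carefully (the commutation $(\mathcal{G}^{m})_{A}=(\mathcal{G}_{A})^{m}$, the explicit walk-to-path reduction, and the orientation-agnostic characterization of $H\cup\tail(H)$, which the paper handles by a direct case split on whether the path leaves $z$ with an arrowhead or a tail). One tiny correction: a realizing inducing path meets an endpoint with an arrowhead \emph{whenever} the MAG edge does, but not ``exactly when'' (a $\rightarrow$ edge of $\mathcal{G}^{m}$ may be realized by a purely bidirected path); only the direction you actually use holds, so the argument is unaffected.
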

\begin{proof}
Suppose $H$ is a head in $\mathcal{G}$. Then it is bidirected-connected in $\mathcal{G}_{\an(H)}$, so by Lemma $\ref{lem:2.1}$ each bidirected path connecting vertices in $H$ is preserved as a collider path in  $\mathcal{G}_{\an(H)}^{m}$. Further as the projection preserves ancestral relation and $H = \barren(\an(H))$, each path is bidirected. Hence any head $H$ in $\mathcal{G}$ is a head in $\mathcal{G}^{m}$. By similar argument, we can see that for a head $H$ in $\mathcal{G}$, any $w \in \tail_{\mathcal{G}}(H)$ is in $\tail_{\mathcal{G}^{m}}(H)$.
\par
Suppose $H$ is a head in $\mathcal{G}^{m}$ so it is bidirected-connected in $\an(H)$ in $\mathcal{G}^{m}$. But each bidirected edge in $\mathcal{G}^{m}$ corresponds to a collider path in $\mathcal{G}$ with intermediate colliders in ancestors of endpoints; hence as the projection preserves ancestral relations, the path is bidirected. Therefore $H$ is also a head in $\mathcal{G}$. Note in general for any $v \leftrightarrow w$ in $\mathcal{G}^{m}$, there is a bidirected path between them in $\mathcal{G}$.
\par
Let $z \in \tail_{\mathcal{G}^{m}}(H)$ so there is a collider path $\pi$ between $z$ and $h \in H$ in $\mathcal{G}^{m}$ ending $\cdots\leftrightarrow h$. We know every bidirected edge in the path $\pi$ corresponds to a bidirected path in $\an(H)$ in $\mathcal{G}$. If the path $\pi$ begins with $z \leftrightarrow$ then $z$ is bidirected-connected to $h$ in $\an(H)$ so $z \in \tail_{\mathcal{G}}(H)$. If the path $\pi$ begins with $z \rightarrow w_{1}$ then in $\mathcal{G}$ we have a collider path between $z$ and $w_{1}$ in $\an(H)$, which ends with $\leftrightarrow w_{1}$. Thus $z$ is also in $\tail_{\mathcal{G}}(H)$.
\end{proof}
\par
Definitions of heads and tails are closely related to the projection of ADMGs. The next lemma allows us to project an ADMG to a Markov equivalent MAG in polynomial time. The algorithm is shown in next section. Let $\mathcal{G}$ be a ADMG and $\mathcal{G}^{m}$ be its projected MAG.
\begin{lemma}\label{projsgtmag}
Let $v,w$ be two vertices then (i) $v \rightarrow w$ in $\mathcal{G}^{m}$ if and only if $v \in \tail_{\mathcal{G}}(w)$ and (ii) $v \leftrightarrow w$ in $\mathcal{G}^{m}$ if and only if $\{v,w\} \in \mathcal{H}(\mathcal{G})$.
\end{lemma}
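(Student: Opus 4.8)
\textbf{Proof proposal for Lemma \ref{projsgtmag}.}

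The plan is to deduce both statements from Proposition \ref{lem:2.7} together with the definition of the projection $\mathcal{G} \mapsto \mathcal{G}^{m}$. Recall that in $\mathcal{G}^{m}$ an edge between $v$ and $w$ is present exactly when $v,w$ are joined by an inducing path in $\mathcal{G}$ (here, with no latent variables, a collider path all of whose colliders lie in $\an_{\mathcal{G}}(\{v,w\})$), and its orientation is determined by the ancestral relations: $v \to w$ if $v \in \an_{\mathcal{G}}(w)$ and $w \notin \an_{\mathcal{G}}(v)$, and $v \leftrightarrow w$ if neither is an ancestor of the other. So the two claims amount to translating ``inducing path plus the relevant ancestral pattern'' into the head/tail language, and Proposition \ref{lem:2.7} already tells us that heads and tails are computed identically in $\mathcal{G}$ and $\mathcal{G}^{m}$.

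For (ii): if $v \leftrightarrow w$ in $\mathcal{G}^{m}$, then neither is an ancestor of the other, and (since $\mathcal{G}^m$ is ancestral, hence $\{v,w\}$ is barren and lies in a single district of $\mathcal{G}^m_{\an(\{v,w\})}$) $\{v,w\}$ is a head of $\mathcal{G}^{m}$; by Proposition \ref{lem:2.7}, $\{v,w\} \in \mathcal{H}(\mathcal{G})$. Conversely, if $\{v,w\} \in \mathcal{H}(\mathcal{G})$, then $\barren_{\mathcal{G}}(\{v,w\}) = \{v,w\}$, so neither is an ancestor of the other, and $v,w$ are bidirected-connected in $\mathcal{G}_{\an(\{v,w\})}$, which is an inducing path; hence $v,w$ are adjacent in $\mathcal{G}^{m}$, and since neither is an ancestor of the other the edge must be $v \leftrightarrow w$. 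For (i): if $v \to w$ in $\mathcal{G}^{m}$, then $v \in \tail_{\mathcal{G}^m}(w)$ (a single vertex $w$ is a head, and a parent of $w$ in $\mathcal{G}^m$ lies in its tail), so by Proposition \ref{lem:2.7}, $v \in \tail_{\mathcal{G}}(w)$. Conversely, suppose $v \in \tail_{\mathcal{G}}(w)$, i.e.\ $v \in (\dis_{\an(w)}(w) \setminus \{w\}) \cup \pa_{\mathcal{G}}(\dis_{\an(w)}(w))$. In either case $v \in \an_{\mathcal{G}}(w)$, so $v$ is a strict ancestor of $w$ and hence $w \notin \an_{\mathcal{G}}(v)$ by acyclicity; it remains to show $v,w$ are joined by an inducing path in $\mathcal{G}$, for then the projection puts an edge there and the orientation must be $v \to w$. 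If $v \in \dis_{\an(w)}(w)$ this is a bidirected path inside $\an_{\mathcal{G}}(w)$; if $v \to u$ for some $u \in \dis_{\an(w)}(w)$, prepend $v \to u$ to a bidirected path from $u$ to $w$ inside $\an_{\mathcal{G}}(w)$, giving a collider path all of whose intermediate vertices lie in $\an_{\mathcal{G}}(w) \subseteq \an_{\mathcal{G}}(\{v,w\})$, i.e.\ an inducing path.

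The only point that needs a little care, and which I expect to be the main obstacle, is the converse direction of (i): one must check that membership of $v$ in $\tail_{\mathcal{G}}(w)$ really produces an \emph{inducing} path (colliders ancestral to the endpoints) and not merely some collider path, and that the resulting edge cannot accidentally be oriented $w \to v$ or $v \leftrightarrow w$ — this is where acyclicity and the fact that $\tail_{\mathcal{G}}(w) \subseteq \an_{\mathcal{G}}(w)$ are used. Everything else is a direct appeal to Proposition \ref{lem:2.7} and the definition of the projection, using that a singleton is always a head and that in an ancestral graph a non-ancestral pair forms a head.
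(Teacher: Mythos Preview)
Your proof is correct, but the forward directions of (i) and (ii) take a different route from the paper. The paper argues all four implications directly from the definition of the projection: for $v \to w$ in $\mathcal{G}^m$ it unpacks the inducing path in $\mathcal{G}$ (noting $\an_{\mathcal{G}}(\{v,w\}) = \an_{\mathcal{G}}(w)$ since $v \in \an_{\mathcal{G}}(w)$, so the intermediate colliders witness $v \in \tail_{\mathcal{G}}(w)$), and similarly for $v \leftrightarrow w$ it observes the inducing path must be bidirected since neither endpoint is an ancestor of the other. You instead observe that the edge in $\mathcal{G}^m$ already encodes the head/tail relation \emph{in $\mathcal{G}^m$} (using that $\mathcal{G}^m$ is ancestral, so $\tail_{\mathcal{G}^m}(w)=\pa_{\mathcal{G}^m}(w)$ and any bidirected pair is a head), and then transfer this to $\mathcal{G}$ via Proposition~\ref{lem:2.7}. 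This is a clean shortcut that avoids re-examining the inducing path in $\mathcal{G}$, at the cost of depending on Proposition~\ref{lem:2.7}; the paper's argument is more self-contained but slightly longer. Your converse directions coincide with the paper's.
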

Since there is at most one edge between any two vertices in a MAG, if we know the tails of every vertex in $\mathcal{G}^{d}$ and every head of size 2, this is sufficient to construct $\mathcal{G}^{m}$. 
\par
Consider Figure \ref{MAGs}(i), this is an ADMG but not a MAG. Tails of 1, 2, 3, 4 are $\{4\}$, $\{3\}$, $\emptyset$, $\emptyset$, respectively. Heads of size 2 are $\{1,2\}$, $\{1,3\}$, $\{2,4\}$, $\{3,4\}$, hence a Markov equivalent MAG of Figure \ref{MAGs}(i) preserves all the original edges and adds one edge $1 \leftrightarrow 2$.

\subsection{Markov Equivalence Of ADMGs}
We now show that Theorem \ref{thm:1.1} and Corollary \ref{cor:1.1.2} can be extended to ADMGs. Note that in general two Markov equivalent ADMGs do not necessarily have the same adjacencies defined with respect to edges; thus we need to redefine adjacencies in terms of m-separations.

\begin{definition}
For a ADMG $\mathcal{G}$ and two vertices $v,w$ in $\mathcal{G}$, $v$ and $w$ are \emph{adjacent} if and only if there is no set $C$ such that $v \perp_m w \mid C$ with $v,w \notin C$.
\end{definition}

Two vertices that are connected by an edge are clearly adjacent, we are excluding pairs that do not share any edges and yet have no conditional independence. In maximal graphs, these two definitions are equivalent.

\begin{theorem}\label{thm:2.8}
For two ADMGs $\mathcal{G}_{1}$ and $\mathcal{G}_{2}$, they are Markov equivalent if and only $\mathcal{S}(\mathcal{G}_{1})=\mathcal{S}(\mathcal{G}_{2})$.
\end{theorem}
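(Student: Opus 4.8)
The plan is to reduce the ADMG statement to the MAG statement (Theorem~\ref{thm:1.1}) via the projection $\mathcal{G} \mapsto \mathcal{G}^m$, using Proposition~\ref{lem:2.7} as the bridge. The key observation is that $\mathcal{G}$ and $\mathcal{G}^m$ are Markov equivalent by construction (the projection preserves m-separation), and Proposition~\ref{lem:2.7} tells us $\mathcal{H}(\mathcal{G}) = \mathcal{H}(\mathcal{G}^m)$ with matching tails, hence $\mathcal{S}(\mathcal{G}) = \mathcal{S}(\mathcal{G}^m)$ since the parametrizing set is defined purely in terms of heads and tails. So both the Markov equivalence relation and the parametrizing set are invariant under projection.

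Given this, the argument is short. For the forward direction, suppose $\mathcal{G}_1$ and $\mathcal{G}_2$ are Markov equivalent ADMGs. Then $\mathcal{G}_1^m$ and $\mathcal{G}_2^m$ are Markov equivalent MAGs (each $\mathcal{G}_i^m$ is Markov equivalent to $\mathcal{G}_i$, and Markov equivalence is transitive), so by Theorem~\ref{thm:1.1} $\mathcal{S}(\mathcal{G}_1^m) = \mathcal{S}(\mathcal{G}_2^m)$; combining with $\mathcal{S}(\mathcal{G}_i) = \mathcal{S}(\mathcal{G}_i^m)$ from Proposition~\ref{lem:2.7} gives $\mathcal{S}(\mathcal{G}_1) = \mathcal{S}(\mathcal{G}_2)$. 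For the reverse direction, suppose $\mathcal{S}(\mathcal{G}_1) = \mathcal{S}(\mathcal{G}_2)$. Then $\mathcal{S}(\mathcal{G}_1^m) = \mathcal{S}(\mathcal{G}_1) = \mathcal{S}(\mathcal{G}_2) = \mathcal{S}(\mathcal{G}_2^m)$, so by Theorem~\ref{thm:1.1} $\mathcal{G}_1^m$ and $\mathcal{G}_2^m$ are Markov equivalent, and therefore so are $\mathcal{G}_1$ and $\mathcal{G}_2$.

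The main thing to be careful about — and what I expect is the real content hidden in the citation to \citet{richardson2002} — is the claim that the projection yields a MAG that is Markov equivalent to $\mathcal{G}$ on the same vertex set. This requires knowing that (a) $\mathcal{G}^m$ really is a MAG (maximal and ancestral), which follows from the inducing-path construction and the fact that ancestral relations are preserved, and (b) that m-separation statements coincide in $\mathcal{G}$ and $\mathcal{G}^m$. Point (b) is the standard correctness property of the MAG projection; since the excerpt invokes it explicitly when introducing the projection in Section~\ref{OMESG}, I would simply cite it rather than reprove it. The only other small point to check is that Proposition~\ref{lem:2.7}'s hypothesis ``$\mathcal{G}$ is an ADMG'' covers both $\mathcal{G}_1$ and $\mathcal{G}_2$, which is given. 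Everything else is bookkeeping: the parametrizing set $\mathcal{S}(\cdot)$ is a function of $(\mathcal{H}(\cdot), \{\tail(H)\}_H)$ alone, so equality of heads-and-tails forces equality of $\mathcal{S}$.
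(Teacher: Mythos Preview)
Your proposal is correct and matches the paper's own proof essentially line for line: the paper's proof of Theorem~\ref{thm:2.8} consists of a single sentence citing Proposition~\ref{lem:2.7} (heads and tails, hence $\mathcal{S}$, are preserved under the projection $\mathcal{G}\mapsto\mathcal{G}^m$) and Theorem~\ref{thm:1.1} (the MAG case), with the Markov equivalence of $\mathcal{G}$ and $\mathcal{G}^m$ taken as a known property of the projection from \citet{richardson2002}. Your write-up simply makes explicit the two directions and the transitivity step that the paper leaves implicit.
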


\begin{corollary}\label{equisummary}
Two ADMGs $\mathcal{G}_{1}$ and $\mathcal{G}_{2}$ are Markov equivalent if and only if ${\mathcal{S}}_{3}(\mathcal{G}_{1})={\mathcal{S}}_{3}(\mathcal{G}_{2})$, and this occurs if and only if $\Tilde{\mathcal{S}}_{3}(\mathcal{G}_{1})=\Tilde{\mathcal{S}}_{3}(\mathcal{G}_{2})$.
\end{corollary}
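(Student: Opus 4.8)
The plan is to reduce the ADMG statement to the MAG case of Corollary \ref{cor:1.1.2} using the projection $\mathcal{G}\mapsto\mathcal{G}^{m}$ of Section \ref{OMESG}. Recall that every ADMG $\mathcal{G}$ is Markov equivalent to its projected MAG $\mathcal{G}^{m}$, so $\mathcal{G}_{1}$ and $\mathcal{G}_{2}$ are Markov equivalent if and only if $\mathcal{G}_{1}^{m}$ and $\mathcal{G}_{2}^{m}$ are. Hence it suffices to show that $\mathcal{S}_{3}$ and $\Tilde{\mathcal{S}}_{3}$ are invariant under the projection; then applying Corollary \ref{cor:1.1.2} to the MAGs $\mathcal{G}_{1}^{m}$ and $\mathcal{G}_{2}^{m}$ finishes the proof, since $\mathcal{G}_{1},\mathcal{G}_{2}$ Markov equivalent $\iff \mathcal{G}_{1}^{m},\mathcal{G}_{2}^{m}$ Markov equivalent $\iff \mathcal{S}_{3}(\mathcal{G}_{1}^{m})=\mathcal{S}_{3}(\mathcal{G}_{2}^{m}) \iff \Tilde{\mathcal{S}}_{3}(\mathcal{G}_{1}^{m})=\Tilde{\mathcal{S}}_{3}(\mathcal{G}_{2}^{m})$, and each occurrence of $\mathcal{G}_{i}^{m}$ can be rewritten as $\mathcal{G}_{i}$ by invariance. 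Because every link in this chain is a biconditional, this delivers both directions of the corollary at once.

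For the invariance of $\mathcal{S}_{3}$, Proposition \ref{lem:2.7} already gives $\mathcal{H}(\mathcal{G})=\mathcal{H}(\mathcal{G}^{m})$ and $\tail_{\mathcal{G}}(H)=\tail_{\mathcal{G}^{m}}(H)$ for every head $H$, hence $\mathcal{S}(\mathcal{G})=\mathcal{S}(\mathcal{G}^{m})$; restricting to cardinalities between $2$ and $3$ gives $\mathcal{S}_{3}(\mathcal{G})=\mathcal{S}_{3}(\mathcal{G}^{m})$. For $\Tilde{\mathcal{S}}_{3}$ one additionally needs that, for every triple, the number of adjacent pairs it contains is the same whether computed in $\mathcal{G}$ or in $\mathcal{G}^{m}$. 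Since adjacency in an ADMG is defined through m-separations and $\mathcal{G}$ and $\mathcal{G}^{m}$ encode the same m-separations, the two graphs have exactly the same adjacency relation (and on the MAG $\mathcal{G}^{m}$ this coincides with edge-adjacency by maximality); together with $\mathcal{S}_{3}(\mathcal{G})=\mathcal{S}_{3}(\mathcal{G}^{m})$ this yields $\Tilde{\mathcal{S}}_{3}(\mathcal{G})=\Tilde{\mathcal{S}}_{3}(\mathcal{G}^{m})$.

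The set-theoretic bookkeeping is routine once Proposition \ref{lem:2.7} is in hand, so the only delicate point, and the expected main obstacle, is making the adjacency notions line up: one must check that the redefined m-separation-based adjacency of the ADMG $\mathcal{G}$ agrees, on every relevant pair, with the edge-adjacency of $\mathcal{G}^{m}$ that governs $\Tilde{\mathcal{S}}_{3}(\mathcal{G}^{m})$, so that the filter ``$1$ or $2$ adjacencies among the vertices of $S$'' selects the same triples in both graphs. An alternative route, avoiding the reduction to $\mathcal{G}_{i}^{m}$, is to start from Theorem \ref{thm:2.8}, i.e.\ that $\mathcal{G}_{1},\mathcal{G}_{2}$ are Markov equivalent iff $\mathcal{S}(\mathcal{G}_{1})=\mathcal{S}(\mathcal{G}_{2})$, and then use the projection exactly as in the proof of Corollary \ref{cor:1.1.2} to cut the condition down to sets of size at most $3$.
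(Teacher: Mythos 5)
Your proposal is correct and follows essentially the same route as the paper: the paper's own (very terse) proof also uses Proposition \ref{lem:2.7} to transfer $\mathcal{S}_{3}$ through the projection $\mathcal{G}\mapsto\mathcal{G}^{m}$, notes that the m-separation-based notion of adjacency makes $\Tilde{\mathcal{S}}_{3}$ invariant as well, and then invokes the MAG result of Corollary \ref{cor:1.1.2}. Your write-up merely makes explicit the chain of biconditionals and the adjacency bookkeeping that the paper leaves implicit, so there is nothing substantive to add.
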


\section{ALGORITHM}\label{algorithm}
In this section, $n$, $e$ denote number of vertices and total edges, respectively.

\floatstyle{boxed} 
\restylefloat{table}

\begin{table*}
\caption{Algorithm 1: obtain $\Tilde{\mathcal{S}}_{3}(\mathcal{G})$ for a MAG $\mathcal{G}$} \label{algo1}
\begin{center}
\begin{tabular}{ll}
    \textbf{Input}: & A MAG $\mathcal{G}(\mathcal{V},\mathcal{E})$ \\ 
    \textbf{Output}: & $\Tilde{\mathcal{S}}_{3}(\mathcal{G})$\\ 
    1 & $S=\emptyset$;\\
    2 & \textbf{for} each $v \in \mathcal{V}$:\\
    3 & \quad \textbf{obtain} $\an_\mathcal{G}(v) = \{v\} \cup \an_\mathcal{G}(\pa_\mathcal{G}(v))$\\
    4 & \quad  \textbf{for} each $w \in \pa_{\mathcal{G}}(v)$:\\
    5 & \quad \quad $S = S \cup \{v,w\}$;\\
    6 & \quad \textbf{for} each $z, w \in \pa_{\mathcal{G}}(v)$ with $z \neq w$ and $z$ is not adjacent to $w$:\\
    7 & \quad \quad $S = S \cup \{v,w,z\}$;\\
    8 & \textbf{for} each $v \leftrightarrow w$:\\
    9 & \quad $S = S \cup \{v,w\}$;\\
    10 & \quad $\tail(\{v,w\})=\dis_{\an(\{v,w\})}(v) \cup \pa_{\mathcal{G}}(\dis_{\an(\{v,w\})}(v)) \setminus \{v,w\}$;\\
    11 & \quad \textbf{for} each $z \in \tail(\{v,w\})$ with $z$ not adjacent to both $v$ and $w$:\\ 
    12 & \quad \quad $S = S \cup \{v,w,z\}$;\\
    13 & \quad \textbf{for} each $z \in \sib_{\mathcal{G}}(\an_{\mathcal{G}}(\{v,w\})) \cap \dis_{\mathcal{G}}(v) \setminus (\an_{\mathcal{G}}(\{v,w\}) \cup \de_{\mathcal{G}}(\{v,w\})$\\
    14 & \quad \quad \quad and not adjacent to both $v$ and $w$:\\
    15 & \quad \quad obtain $\dis_{\an(\{v,w,z\})}(v)$;\\
    16 & \quad \quad \textbf{if} $z \in \dis_{\an(\{v,w,z\})}(v)$:\\ 
    17 & \quad \quad $S = S \cup \{v,w,z\}$;\\
    18 & \textbf{return} $S$\\

\end{tabular}
\end{center}
\end{table*}

\begin{table*}
\caption{Algorithm 2: obtain a MAG $\mathcal{G}^{m}$ for an ADMG $\mathcal{G}$} \label{algo2}
\begin{center}
\begin{tabular}{ll}
    \textbf{Input}: & An ADMG $\mathcal{G}(\mathcal{V},\mathcal{E})$ \\ 
    \textbf{Output}: & A Markov equivalent MAG $\mathcal{G}^{m}(\mathcal{V}, \mathcal{E}^m)$\\ 
    1 & Start with $\mathcal{G}^{m}$ that have the same vertices as $\mathcal{G} $ but no adjacencies;\\
    2 & \textbf{for} each $v \in \mathcal{V}$:\\
    3 & \quad \textbf{obtain} $\an_\mathcal{G}(v) = \{v\} \cup \an_\mathcal{G}(\pa_\mathcal{G}(v))$\\
    4 & \quad $\tail(v)=\dis_{\an(v)}(v) \cup \pa_{\mathcal{G}}(\dis_{\an(v)}(v)) \setminus \{v\}$\\
    5 & \quad \textbf{add} $w \rightarrow v \in \mathcal{E}^{m}$ for each $w \in \tail(v)$;\\
    6 & \textbf{for} each $v,w \in \mathcal{V}$ with no ancestral relation and in the same district:\\
    7 & \quad obtain $\dis_{\an(\{v,w\})}(v)$;\\
    8 & \quad \textbf{if} $w \in \dis_{\an(\{v,w\})}(v)$:\\
    9 & \quad \quad \textbf{add} $v \leftrightarrow w \in \mathcal{E}^{m}$;\\
    10 & \textbf{return} $\mathcal{G}^{m}$\\
\end{tabular}
\end{center}
\end{table*}

\subsection{MAGs}

 We assume that $n=O(e)$, since otherwise the graph will be disconnected.  Firstly, we propose an algorithm to identify $\Tilde{\mathcal{S}}_{3}(\mathcal{G})$ of a given MAG $\mathcal{G}$ and show that it runs in polynomial time ($O(ne^{2})$). To test equivalence of two MAGs, it is sufficient to compare their $\Tilde{\mathcal{S}}_{3}$, by Corollary \ref{cor:1.1.2}. Vertices are assumed to be in topological order. If not, this can be achieved with an $O(n+e)$ sort. We assume we have access to $\pa_\mathcal{G}(v)$ and $\sib_\mathcal{G}(v)$ for each $v \in \mathcal{V}$.

Let $A_1(\mathcal{G})$ denote the output of Algorithm 1 when applied to a MAG, $\mathcal{G}$. 

\begin{proposition}\label{A}
For a MAG $\mathcal{G}$, $A_{1}(\mathcal{G}) = \Tilde{\mathcal{S}}_{3}(\mathcal{G})$.
\end{proposition}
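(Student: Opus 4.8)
The goal is to show that Algorithm 1, on input a MAG $\mathcal{G}$, outputs exactly $\Tilde{\mathcal{S}}_{3}(\mathcal{G})$. Since $\Tilde{\mathcal{S}}_3$ consists of pairs that are adjacent and triples with exactly one or two adjacencies that lie in $\mathcal{S}(\mathcal{G})$, I would prove the two inclusions $A_1(\mathcal{G}) \subseteq \Tilde{\mathcal{S}}_3(\mathcal{G})$ and $\Tilde{\mathcal{S}}_3(\mathcal{G}) \subseteq A_1(\mathcal{G})$ by walking through the blocks of the pseudocode and matching each one against the structural characterization of heads and tails of size $\leq 3$ provided by Proposition \ref{Prodiscriminatinggraph} (and, underneath it, Proposition \ref{msepaandS}).

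First I would dispose of the pairs. Lines 4--5 and 8--9 add $\{v,w\}$ whenever $w \to v$ or $v \leftrightarrow w$; by Proposition \ref{Prodiscriminatinggraph}(i) these are exactly the adjacent pairs, i.e.\ exactly the size-$2$ members of $\mathcal{S}(\mathcal{G})$, which are precisely the size-$2$ members of $\Tilde{\mathcal{S}}_3(\mathcal{G})$. Next the triples. A size-$3$ head $H$ in a MAG is either (a) a singleton $\{v\}$ whose tail contributes $v$ together with two tail vertices $w,z$ — but these produce triples only when there are $\leq 2$ adjacencies, and a vertex with its parents is the unshielded-collider case; or (b) a bidirected-connected pair $\{v,w\}$ together with one tail vertex $z$; or (c) a bidirected-connected triple — but such a triple is a triangle with $3$ adjacencies, hence excluded from $\Tilde{\mathcal{S}}_3$. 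I would argue that lines 6--7 generate exactly case (a): $\{v,w,z\}$ with $w,z \in \pa_\mathcal{G}(v)$ nonadjacent is the unshielded collider at $v$, and by Proposition \ref{Prodiscriminatinggraph}(ii) these are exactly the unshielded triples in $\mathcal{S}(\mathcal{G})$; any unshielded triple in $\mathcal{S}(\mathcal{G})$ has its midpoint as a collider, so both its edges point into the midpoint, so it is caught here. The remaining triples in $\Tilde{\mathcal{S}}_3$ have a head of size $2$, say $\{v,w\}$ (so $v \leftrightarrow w$, detected in line 8), plus a third vertex $z \in \tail(\{v,w\})$, where additionally $\{v,w,z\}$ has only one or two adjacencies so $z$ is nonadjacent to at least one of $v,w$. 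I would split on whether $z$ is nonadjacent to \emph{both} (handled in lines 11--12, where the tail is computed directly in line 10 via the definition $\tail(\{v,w\}) = \dis_{\an(\{v,w\})}(\{v,w\}) \cup \pa(\dots) \setminus \{v,w\}$) or nonadjacent to exactly one (handled in lines 13--17).

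The main obstacle is the correctness of lines 13--17, the "two-adjacency" case. Here the issue is efficiency-driven: one cannot afford to recompute $\tail(\{v,w,z\})$-style districts for every candidate $z$, so the algorithm first restricts $z$ to a cheaply computed superset — siblings of $\an_\mathcal{G}(\{v,w\})$ that lie in $\dis_\mathcal{G}(v)$ but outside $\an_\mathcal{G}(\{v,w\}) \cup \de_\mathcal{G}(\{v,w\})$ and nonadjacent to both\footnote{as written; one of $v,w$ in the two-adjacency case} — and only then, in line 15, computes $\dis_{\an(\{v,w,z\})}(v)$ and tests membership of $z$. I would need to show: (soundness) if line 16's test passes then $\{v,w,z\}$ is genuinely in $\mathcal{S}(\mathcal{G})$, i.e.\ $\{v,w\}$ is a head and $z \in \tail(\{v,w\})$ when the appropriate ancestral closure is taken — here one uses that $v \leftrightarrow w$ with no ancestral relation makes $\{v,w\}$ a head, and that $z$ being bidirected-connected to $v$ inside $\mathcal{G}_{\an(\{v,w,z\})}$ forces $z$ into the district and hence into $\tail(\{v,w\})$ once we note $\an(\{v,w,z\})$ collapses appropriately; and (completeness) conversely, any $z \in \tail(\{v,w\})$ nonadjacent to exactly one of $v,w$ survives all the filters in line 13. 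Completeness is the delicate direction: I must verify that membership in the tail of the head $\{v,w\}$ indeed implies $z \in \sib(\an(\{v,w\})) \cap \dis(v)$ and $z \notin \an(\{v,w\}) \cup \de(\{v,w\})$ — the last two exclusions because a tail vertex that were an ancestor of the head, or a descendant of it, would either already be adjacent to a head vertex (contradicting the two-adjacency assumption) or violate the barren/ancestral structure of a MAG. I would lean on Lemma \ref{lem:2.1}, Proposition \ref{lem:2.7} and the ancestral property ($\sib(v) \cap \an(v) = \emptyset$) throughout to pin down these containments, and invoke Proposition \ref{Prodiscriminatinggraph} one final time to convert "$\{v,w,z\} \in \mathcal{S}(\mathcal{G})$" into the discriminating-path statement that certifies membership in $\Tilde{\mathcal{S}}_3$.
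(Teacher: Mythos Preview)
Your plan has a genuine gap in the analysis of lines 13--17, and it stems from two related misreadings.

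First, your case (c) is wrong: a head of size $3$ need \emph{not} be a triangle. The definition only asks that $\{v,w,z\}$ be bidirected-connected in $\mathcal{G}_{\an(\{v,w,z\})}$, possibly through intermediate vertices, and that it be its own barren subset. The simplest counterexample is $v \leftrightarrow w \leftrightarrow z$ with $v,z$ nonadjacent: this is a head with exactly two adjacencies, so it lies in $\Tilde{\mathcal{S}}_3(\mathcal{G})$ and must be produced by the algorithm. Since $z \notin \an(\{v,w\})$, we have $z \notin \tail(\{v,w\})$, so this triple does \emph{not} arise from a size-$2$ head plus a tail vertex; it is genuinely a size-$3$ head. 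Your case split therefore misses an entire class of triples in $\Tilde{\mathcal{S}}_3$.

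Second, lines 13--17 are precisely what pick up these size-$3$ heads, not (as you propose) tail vertices of $\{v,w\}$. Observe that line 13 requires $z \notin \an_\mathcal{G}(\{v,w\})$, whereas every member of $\tail(\{v,w\})$ lies in $\an_\mathcal{G}(\{v,w\})$ by definition; so the block cannot be searching for tail vertices at all. Rather, the exclusions $z \notin \an(\{v,w\}) \cup \de(\{v,w\})$ together with $v \leftrightarrow w$ force $\barren(\{v,w,z\}) = \{v,w,z\}$, and the test at line 16 checks that $\{v,w,z\}$ is bidirected-connected in $\an(\{v,w,z\})$---i.e.\ that $\{v,w,z\}$ is a head. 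The paper's own proof makes this decomposition explicit: it shows that lines 7, 12, 17 correspond respectively to triples arising from heads of size $1$, $2$, and $3$ (the sets the paper calls $H_1$, $H_2$, $H_3^a$), and then observes that the adjacency filters at lines 6, 11, 14 cut the output down from $\hat{\mathcal{S}}_3$ to $\Tilde{\mathcal{S}}_3$. Your split of lines 11--12 versus 13--17 by ``nonadjacent to both'' versus ``nonadjacent to exactly one'' is not what the code does; both blocks use the same adjacency filter (exclude triangles), and the real distinction is tail-of-size-$2$-head versus size-$3$-head. A smaller related issue: lines 6--7 only catch unshielded colliders where both arrowheads come from \emph{directed} edges; colliders of the form $a \leftrightarrow b \leftarrow c$ or $a \leftrightarrow b \leftrightarrow c$ are picked up later at lines 12 or 17, not here.
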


\subsection{Complexity Of Algorithm 1}

    The first loop from line 2 to line 7 runs at most $O(e^{2})$ times as the worst case is that one vertex have all others as its parents. There are at most $e$ bidirected edges so the second loop from line 8 to line 17 repeats at most $e$ times. There are three esrial tasks inside the second loop. The first one is line 10 which obtains the tails of $\{v,w\}$. The computation of obtaining tails given parents is $O(n+e)$. The second task, i.e.\ the first subloop from line 11 to line 12, is carried at most $n-2$ times as the size of each tail is at most $n-2$. For the third task from line 13 to line 17, there are at most $n-2$ potential candidates for the third member, and obtaining the district costs $O(n+e)$. Thus the overall complexity of Algorithm 1 is $O(e^{2}+e((n+e)+n+n(n+e)))=O(ne^{2})$. 
    \par
    Note that the number of potential candidates for third member of heads of size 3 depends on sizes of districts. If the number is high then it means districts are large so there are at least as many bidirected edges as potential candidates, so if the graph is sparse we can use $e$ to represent the number of candidates instead of $n$ when computing complexity. There are most $O(e^{2})$ sets in $\Tilde{\mathcal{S}}_{3}(\mathcal{G})$, and some graphs achieve this bound, for example, a DAG where one vertex have all others as its parents. 
    \par
    To test ordinary Markov equivalence of two MAGs, it is sufficient to compare their output of Algorithm 1 after a sort of order $O(e^{2} \log e^{2}) = O(e^{2} \log e)$. Note that $\log e = O(\log n)$, therefore the complexity of verifying Markov equivalence between two MAGs is still $O(ne^{2})$. Thus our algorithm is faster than the one proposed by \citet{ali2009}, which is only $O(ne^{4})$. 

\subsection{ADMGs}
 Algorithm 2 converts an ADMG $\mathcal{G}$ to a Markov equivalent MAG $\mathcal{G}^{m}$, as proven by Lemma \ref{projsgtmag}. To test Markov equivalence between two ADMGs, it is sufficient to put their equivalent MAGs in Algorithm 1 to obtain the corresponding sets $\Tilde{\mathcal{S}}_{3}$ and compare the sets.

\subsection{Complexity Of Algorithm 2}
For the first loop from line 3 to 5, it costs $O(n(n+e))$ since there are $n$ vertices and it takes $O(n+e)$ to obtain a district. The second loop from line 6 to 9 is at $O(n^{2}(n+e))$. Thus the overall complexity is $O(n(n+e)+n^{2}(n+e))=O(n^{3}+n^{2}e)=O(n^{2}e)$. The total cost for identifying Markov equivalence between two ADMGs is therefore $O(ne^{2})$.

\subsection{Comparison To Previous Algorithms}
Among previous characterizations of MAGs, only \citet{ali2009} provide a polynomial time algorithm to verify Markov equivalence. They consider all triples in a discriminating path; in order to do this, they iterate through (up to) $n-2$ levels; at each level they consider all remaining colliders ($O(e^2)$) and then check each set of reachable edges ($O(e^2)$). Conversely, we ignore any triples for which all three adjacencies are present (since they will trivially always be present). 
\par
In addition to the reduction in complexity, if we modify Algorithm 1 to compute 
$\mathcal{S}_{3}(\mathcal{G})$, the output contains more information. By Proposition \ref{msepaandS}, a set $\{a_1, a_2, a_3\}$ is missing from $\mathcal{S}_3$ if and only if there is a corresponding m-separation between (say) $a_1, a_2$ conditional on a set that includes $a_3$.  Thus we can view the parametrizing set as a summary of independence information in the graph. This is a novel perspective compared to previous theorems, which characterize graphs by structures like minimal collider paths or colliders with order, and do not have a straightforward connection to conditional independence.

\subsection{Empirical Complexity}

An experiment on random graphs shows that empirical complexity of Algorithm 1 is at $O(e^2)$ for many sparse graphs ($e = O(n)$). One random graph (ADMG) is generated in the following way. We first fix a topological ordering and the total number of edges ($e = 3n$). Then two vertices become adjacent with uniform probability. Once skeleton is determined, an edge is independently either directed or bidirected with $p = 0.5$. For each $n = 20,40,60,80,100$, we generate $N = 250$ random graphs then average the empirical complexity. Figure \ref{empiricalcomplexity} is the empirical complexity against $n^2$.

\begin{figure}
  \centering
  \includegraphics[scale = 0.4]{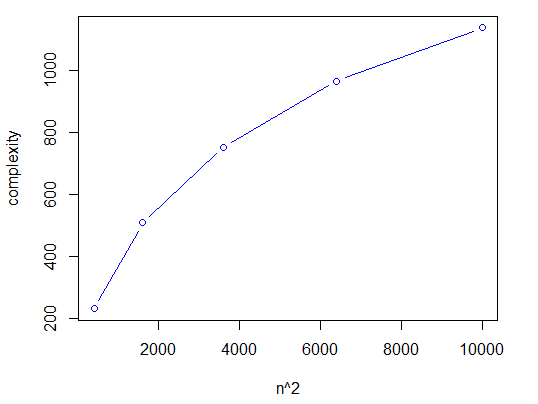} 
  \caption{Empirical complexity against $n^{2}$}
  \label{empiricalcomplexity}
\end{figure}

Suppose directed edges are added independently with probability $r/n$ according to a predetermined topological order, where $n$ is the number of vertices and $r \in \mathbb{R}^+$ is constant. The following proposition bounds the size of the ancestor sets in our sparse random graphs.  In particular, the largest average number of ancestors is at most $e^r$.
 
 \begin{proposition}
Let $A_i$ be the number of ancestors of the vertex $i$.  Then
\begin{align*}
\mathbb{E} A_i = \left( 1 + \frac{r}{n} \right)^{i-1}.
\end{align*}
In particular,
\begin{align*}
\mathbb{E} A_n = \left( 1 + \frac{r}{n} \right)^{n-1} \longrightarrow e^r.
\end{align*}
\end{proposition}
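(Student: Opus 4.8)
The plan is to set up a recursion for $\mathbb{E} A_i$ by conditioning on which of the earlier vertices are ancestors of $i$, and then solve the recursion explicitly. Fix the predetermined topological order $1, 2, \ldots, n$, so that an edge can only go from a lower-indexed vertex to a higher-indexed one, and recall that each directed edge $j \to i$ with $j < i$ is present independently with probability $r/n$. Write $A_i$ for the number of ancestors of $i$, where I will use the convention that $i$ counts as an ancestor of itself, so $A_1 = 1$; this matches the formula since $(1 + r/n)^{0} = 1$. (If instead the intended convention excludes $i$ itself, the same argument gives $\mathbb{E} A_i = (1+r/n)^{i-1} - 1$ after trivial bookkeeping; I would state the convention explicitly at the start.)

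The key observation is that a vertex $j < i$ is an ancestor of $i$ if and only if there is a directed path from $j$ to $i$, which happens if and only if there is some vertex $k$ with $j \le k < i$ such that $k$ is an ancestor of $j$-reachable set and $k \to i$ is an edge; more cleanly, $j \in \an(i)$ iff either $j \to i$ directly or $j \to k$ for some ancestor-path into an ancestor of $i$. The cleanest route is: condition on the set $\an(i) \setminus \{i\}$ being determined by the edges among $\{1, \ldots, i-1\}$ together with the edges into $i$. Actually the slickest version: a vertex $k < i$ lies in $\an(i)$ iff $k \in \an(j)$ for some $j$ with $j \to i$, or $k \to i$ directly --- equivalently $\an(i) = \{i\} \cup \bigcup_{j : j \to i} \an(j)$. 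By linearity and independence of the edge indicators into $i$ from the sub-DAG on $\{1,\dots,i-1\}$, I would compute $\mathbb{P}(k \in \an(i))$ by inclusion over which $j$'s point to $i$. A more economical bookkeeping: for each $k < i$, $k \notin \an(i)$ iff for every $j \in \an_{\ge k}$-structure... this gets complicated, so instead I will use the recursion $\mathbb{E} A_i = 1 + \sum_{j=1}^{i-1} \mathbb{P}(j \in \an(i))$ is not separable directly; better to condition on the "last step."

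Here is the recursion I would actually carry out. Let $I_{ji} = \mathbbm{1}[j \to i]$. Then $\an(i) = \{i\} \cup \bigcup_{j < i : I_{ji} = 1} \an(j)$, and crucially the sub-DAG on $\{1, \ldots, i-1\}$ (hence all the sets $\an(j)$ for $j < i$) is independent of the indicators $(I_{ji})_{j<i}$. Taking expectations and using $|\{i\} \cup \bigcup_j \an(j)| = 1 + |\bigcup_j (\an(j)\setminus\{i\})|$, I would bound and in fact evaluate via the following: rather than the union, note $\an(i)$ can be generated sequentially, and $\mathbb{E}|\an(i)| = 1 + \mathbb{E}\big[\,\big|\bigcup_{j: I_{ji}=1}\an(j)\big|\,\big]$. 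The union makes this not immediately a clean product, so the honest approach is to track $\mathbb{P}(k \in \an(i))$ for each $k$: $k \in \an(i)$ iff there is a directed path $k = v_0 \to v_1 \to \cdots \to v_m = i$. Summing over $k$ and over path lengths and using independence of distinct edges gives $\mathbb{E} A_i = \sum_{k \le i} \mathbb{P}(k \leadsto i) = \sum_{\text{paths } \pi \text{ ending at } i} (r/n)^{\text{len}(\pi)}$, where the empty path (just $i$) contributes $1$. The number of directed paths of length $m$ ending at $i$ with all vertices in $\{1,\dots,i\}$ and respecting the order is $\binom{i-1}{m}$ (choose the $m$ intermediate-or-start vertices below $i$ in increasing order). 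Hence $\mathbb{E} A_i = \sum_{m=0}^{i-1} \binom{i-1}{m} (r/n)^m = (1 + r/n)^{i-1}$ by the binomial theorem, which is exactly the claim. The limit $\mathbb{E} A_n = (1+r/n)^{n-1} \to e^r$ is then the standard exponential limit.

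The main obstacle is handling the union/overcounting correctly: naively writing $\mathbb{E} A_i$ in terms of $\mathbb{E} A_j$ for $j < i$ double-counts vertices that are ancestors via several incoming edges, so the recursion $\mathbb{E} A_i \overset{?}{=} 1 + (r/n)\sum_{j<i}\mathbb{E} A_j$ is only an upper bound, not an equality. The path-counting argument sidesteps this: $\mathbb{P}(k \leadsto i)$ is itself an inclusion-exclusion over paths, but when we sum $\sum_k \mathbb{P}(k \leadsto i)$ and expand each probability as a sum over paths with a single leading term, the cross terms of inclusion-exclusion must be shown to cancel --- or, more cleanly, one observes that $A_i = \sum_k \mathbbm{1}[k \leadsto i]$ and $\mathbbm{1}[k \leadsto i] \le \sum_{\pi: k \leadsto i} \prod_{e \in \pi} I_e$ with equality in expectation failing. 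The genuinely clean fix is to define $A_i$ via the recursion on the random DAG directly and prove $\mathbb{E}[A_i \mid \text{sub-DAG on } [i-1]] $ is not what we want either. So the cleanest rigorous path is: show $\mathbb{E} A_i = \sum_{S \subseteq [i-1]} \mathbb{P}(\an(i) \setminus \{i\} = S)\,(|S|+1)$ and compute $\mathbb{P}(k \in \an(i))$ exactly. I would prove $\mathbb{P}(k \in \an(i)) = 1 - (1 - r/n)^{\,|\text{something}|}$? No --- the exact statement is $\mathbb{P}(k \notin \an(i))$ depends on the structure. The rescue: prove $\mathbb{E} A_i = (1+r/n)^{i-1}$ by strong induction using the identity $A_i = 1 + |\bigcup_{j \to i} \an(j)|$ together with an inclusion-exclusion in which, after taking expectations, the structure of independent edges makes all the higher-order terms collapse --- equivalently, use that $\mathbb{E}\prod$ factorizes. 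I expect this inclusion-exclusion collapse to be the one step requiring care, and I would present it as: for each $k$, $k \in \an(i)$ iff at least one path exists, and $\mathbb{P}$ of that computed by a clean induction $p_k^{(i)} = 1 - \prod_{k < j < i}(1 - (r/n) p_k^{(j)})(1-r/n)^{[k<i]}$... which, summed, telescopes to the binomial sum. I will present the path-counting/binomial-theorem version as the main line and relegate the justification of why expectations of path-indicators sum correctly (namely that $A_i$ equals the number of $k$ with a path, and linearity of expectation applies vertex-by-vertex while the per-vertex probability is computed by the recursion above) to a short lemma.
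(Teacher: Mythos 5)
Your main computational line --- counting directed paths ending at $i$ and applying the binomial theorem --- is essentially the same calculation as the paper's. The paper proves the formula by induction via the recursion $\mathbb{E}A_{j+1} = 1 + \sum_{i\le j}\mathbb{E}\,[\mathbbm{1}_{\{i\to j+1\}}A_i] = 1 + \tfrac{r}{n}\sum_{i\le j}\mathbb{E}A_i$, followed by a binomial-coefficient identity; that is exactly the inductive form of your closed-form sum $\sum_{m=0}^{i-1}\binom{i-1}{m}(r/n)^m = (1+r/n)^{i-1}$, since the recursion is precisely the one satisfied by the number of order-respecting directed paths ending at $i$ (including the trivial path). So the two arguments compute the same quantity, yours directly and the paper's by induction.

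The genuine problem is the one you circle around in your final paragraph and never resolve: if $A_i$ is the number of \emph{distinct} ancestors, the stated equality is simply false, so no inclusion--exclusion bookkeeping can rescue it. Take three vertices with $p = r/n$: $\mathbb{P}(2 \in \an(3)) = p$ and $\mathbb{P}(1 \in \an(3)) = p + p^2 - p^3$, so $\mathbb{E}A_3 = (1+p)^2 - p^3 < (1+p)^2$. The quantity that exactly obeys your path count and the paper's recursion is the number of directed paths into $i$ (ancestors counted with multiplicity); the identity $A_{j+1} = 1 + \sum_{i \to j+1} A_i$ holds for that count but only as ``$\le$'' for distinct ancestors --- and your overcounting critique therefore applies to the paper's own proof as well, not just to your abandoned recursions. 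The correct resolution is not to chase the equality but to state the result as a bound, $\mathbb{E}A_i \le (1+r/n)^{i-1} \le e^r$, which is all the sparse-graph complexity argument uses and is how the surrounding text describes the proposition (it ``bounds'' the ancestor sets, ``at most $e^r$''). Your path-count/binomial argument, presented as an exact formula for the expected number of paths and hence an upper bound on the expected number of ancestors, is a complete proof of that; the tangled attempts at an exact per-vertex probability recursion should be dropped.
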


Markov's inequality gives us an easy corollary.

\begin{corollary}
$\mathbb{P}(A_i \geq k) \leq e^r/k$ for any $1 \leq i \leq n$ and $k \geq 1$.
\end{corollary}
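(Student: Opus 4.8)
The plan is to apply Markov's inequality directly to the nonnegative integer-valued random variable $A_i$, using the expectation already computed in the preceding proposition. Markov's inequality states that for a nonnegative random variable $A_i$ and any threshold $k > 0$, we have $\mathbb{P}(A_i \geq k) \leq \mathbb{E} A_i / k$. Since $A_i$ counts ancestors of vertex $i$, it is certainly nonnegative, so the inequality applies without further justification.

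The only remaining step is to bound $\mathbb{E} A_i$ uniformly in $i$. By the preceding proposition, $\mathbb{E} A_i = (1 + r/n)^{i-1}$. I would observe that the map $i \mapsto (1 + r/n)^{i-1}$ is nondecreasing in $i$ (since $1 + r/n \geq 1$), so its maximum over $1 \leq i \leq n$ is attained at $i = n$, giving $\mathbb{E} A_i \leq (1 + r/n)^{n-1}$. The standard bound $(1 + x/m)^{m} \leq e^{x}$ for $x \geq 0$, applied with the slightly smaller exponent $n - 1 < n$, then yields $(1 + r/n)^{n-1} \leq (1 + r/n)^{n} \leq e^{r}$. Hence $\mathbb{E} A_i \leq e^r$ for every $1 \leq i \leq n$.

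Combining the two steps gives $\mathbb{P}(A_i \geq k) \leq \mathbb{E} A_i / k \leq e^r / k$ for all $1 \leq i \leq n$ and all $k \geq 1$, which is exactly the claimed tail bound. There is no genuine obstacle here: the result is an immediate corollary, as its name suggests. The only point requiring the smallest care is that Markov's inequality controls $\mathbb{P}(A_i \geq k)$ by $\mathbb{E} A_i / k$ rather than by the fixed quantity $e^r/k$, so I must route through the uniform bound $\mathbb{E} A_i \leq e^r$ rather than through the exact expression $(1 + r/n)^{i-1}$; replacing the $i$-dependent expectation by its supremum $e^r$ is what makes the stated bound hold simultaneously for every index $i$.
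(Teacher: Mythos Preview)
Your proof is correct and follows exactly the approach the paper intends: the paper simply states that the corollary follows from Markov's inequality, and you have spelled out precisely that argument together with the uniform bound $\mathbb{E}A_i \leq (1+r/n)^{n-1} \leq e^r$.
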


Now it is straightforward to show that for sparse graphs, the complexity will be $O(e^2)$. This is because the main contribution of the complexity comes from counting heads of size 3. By bounding the sizes of ancestor sets, line 15 will run in constant time $O(1)$ instead of $O(n+e)$. Thus the overall complexity for sparse graphs is at $O(e^{2}+e((n+e)+n+n))=O(e^{2})$.

Here is an example for which the upper bound of complexity of Algorithm 1 is reached. Consider the graph in Figure \ref{complexity}. For every $i$ and $j$, $\{v_{i}, w, z_{j}\}$ forms a head of size 3. If $N,M,L$ are at $O(n)$ then the cost for identifying all these heads is at $O(ne^2)$.
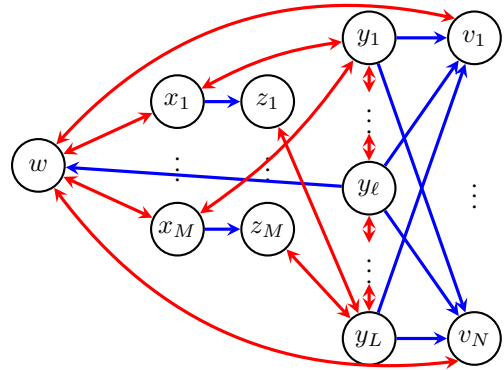
\begin{figure}[ht]
\begin{center}
   \begin{tikzpicture}
   [rv/.style={circle, draw, thick, minimum size=7mm, inner sep=0.5mm}, node distance=12mm, >=stealth]
   \pgfsetarrows{latex-latex};
 \begin{scope}
   \node[rv]  (w)       {$w$};
   \node[rv, above right of=w, xshift=10mm]  (x1)       {$x_1$};
   \node[below of=x1, yshift=4mm] (dots2) {$\vdots$};
   \node[rv, below right of=w, xshift=10mm]  (xM)       {$x_M$};
   \node[rv, right of=x1]  (z1)  {$z_1$};
   \node[right of=dots2] (dots3) {$\vdots$};
   \node[rv, right of=xM]  (zM)  {$z_M$};
   \node[rv, above right of=z1, xshift=5mm] (y1) {$y_1$};
   \node[inner sep=0mm, below of=y1, yshift=2mm] (dots4) {$\vdots$};
   \node[rv, below of=y1, yshift=-8mm] (yl) {$y_\ell$};
   \node[inner sep=0mm, below of=yl, yshift=2mm] (dots5) {$\vdots$};
   \node[rv, below of=dots5, yshift=2mm] (yL) {$y_L$};
      \node[rv, right of=y1, xshift=2mm]  (v1)       {$v_1$};
   \node[right of=yl, xshift=2mm] (dots) {$\vdots$};
   \node[rv, right of=yL, xshift=2mm]  (vN)       {$v_N$};
   \draw[<->, very thick, red] (v1.135) to[bend right=30] (w);
   \draw[<->, very thick, red] (vN.235) to[bend left=30] (w);
   \draw[<->, very thick, red] (x1) -- (w);
   \draw[<->, very thick, red] (xM) -- (w);
   \draw[->, very thick, blue] (x1) -- (z1);
   \draw[->, very thick, blue] (xM) -- (zM);
   \draw[<->, very thick, red] (y1) -- (dots4);
   \draw[<->, very thick, red] (yl) -- (dots4);
   \draw[<->, very thick, red] (yl) -- (dots5);
   \draw[<->, very thick, red] (yL) -- (dots5);
   \draw[->, very thick, blue] (y1) to[bend right=0] (v1);
   \draw[->, very thick, blue] (yl) to (v1);
   \draw[->, very thick, blue] (yL) to[bend right=0] (v1);
   \draw[->, very thick, blue] (yl) -- (w);
   \draw[->, very thick, blue] (y1) to[bend left=0] (vN);
  \draw[->, very thick, blue] (yl) -- (vN);
  \draw[->, very thick, blue] (yL) to[bend left=0 ] (vN);
  \draw[<->, very thick, red] (yL) to[bend left=0] (z1);
  \draw[<->, very thick, red] (yL) to[bend left=0] (zM);
  \draw[<->, very thick, red] (y1) to[bend right=10] (x1);
  \draw[<->, very thick, red] (y1) to[bend left=10] (xM);
   \end{scope}
 \end{tikzpicture}
 \end{center}
  \caption{A sequence of graphs in which the maximum complexity is achieved by Algorithm 1.  Note that $y_1$ is connected by a bidirected edge to every $x_i$, and $y_L$ to every $z_i$.}
 \label{complexity}
 \end{figure}
 






\begin{appendices}

\section{Proofs from Section 3}

\begin{lemma35}
If $v,w$ are connected by a collider path $\pi_{1}$ in an ADMG $\mathcal{G}$ then they are connected by a collider path $\pi_{2}$ in $\mathcal{G}^{m}$ where $\pi_{2}$ uses a subset of the internal vertices of $\pi_{1}$. Also, if $\pi_{1}$ starts with $v \rightarrow$, so does $\pi_{2}$.
\end{lemma35}

\begin{proof}
Any adjacent pair in $\mathcal{G}$ is also adjacent in $\mathcal{G}^{m}$ as any edge is a trivial collider path. So the path $\pi_{1}$ is still present in $\mathcal{G}^{m}$ however it may not be a collider path (if it is then we are done) and we aim to find a collider path $\pi_{2}$. 
\par
Suppose $a$ is an internal vertex and is a noncollider in $\pi_{1}$ in $\mathcal{G}^{m}$ where $a \leftrightarrow b$ in $\mathcal{G}$ is changed to $a \rightarrow b$ in $\mathcal{G}^{m}$. This is because $a \in \an_{\mathcal{G}}(b)$. Consider the vertex $c$ on the other side of $a$, suppose it is $c \leftrightarrow a$ in $\mathcal{G}$. Then $b \leftrightarrow a \leftrightarrow c$ is a collider path where $a \in \an_{\mathcal{G}}(\{b,c\})$ so $b,c$ becomes adjacent in $\mathcal{G}^{m}$ and we can remove $a$ from the path. If $c \rightarrow a$, i.e.\ $c$ is one of end vertices, then in the projected graph we have $c \rightarrow a$. We can do this repeatedly until it terminates and the final path is a collider path in $\mathcal{G}^{m}$ that connects $v,w$.
\end{proof}

\begin{lemma37}
Let $v,w$ be two vertices then (i) $v \rightarrow w$ in $\mathcal{G}^{m}$ if and only if $v \in \tail_{\mathcal{G}}(w)$ and (ii) $v \leftrightarrow w$ in $\mathcal{G}^{m}$ if and only if $\{v,w\} \in \mathcal{H}(\mathcal{G})$.
\end{lemma37}

\begin{proof}
For (i), if $v \rightarrow w$ in $\mathcal{G}^{m}$ then $v \in \an_{\mathcal{G}}(w)$ and in $\mathcal{G}$ there is an inducing path between $v$ and $w$ (a collider path). If $v \rightarrow w$ in $\mathcal{G}$ then we are done. Otherwise any intermediate vertex on the path is in $\an_{\mathcal{G}}(\{v,w\}) = \an_{\mathcal{G}}(w)$ hence $v \rqarrow ... \leftrightarrow w$. Therefore $v \in \tail_{\mathcal{G}}(w)$. Conversely, $v \in \tail_{\mathcal{G}}(w)$ implies that $v \in \an_{\mathcal{G}}(w)$ and there is a collider path between $v$ and $w$ with any intermediate vertex in $\an_{\mathcal{G}}(w)$ hence the path is an inducing path and $v \rightarrow w$ in $\mathcal{G}^{m}$.
\par
For (ii), if $v \leftrightarrow w$ in $\mathcal{G}^{m}$ then there is an inducing path between $v$ and $w$ (a collider path) in $\mathcal{G}$ and $v,w$ are not ancestors to each other. Also any intermediate vertex on the path is in $\an_{\mathcal{G}}(\{v,w\})$ which suggests that the path is a bidirected path. Therefore, $\{v,w\}$ forms a head. On the other hand, if $\{v,w\}$ is a head in $\mathcal{G}$ then they are not ancestors to each other and there is a bidirected path between them with any intermediate vertex in $\an_{\mathcal{G}}(\{v,w\})$ so this path is an inducing path and $v \leftrightarrow w$ in $\mathcal{G}^{m}$.
\end{proof}

\begin{theorem38}
For two ADMGs $\mathcal{G}_{1}$ and $\mathcal{G}_{2}$, they are ordinary Markov equivalent if and only $\mathcal{S}(\mathcal{G}_{1})=\mathcal{S}(\mathcal{G}_{2})$.
\end{theorem38}

\begin{proof}
This follows from Proposition 3.7 and Theorem 3.2.
\end{proof}

\begin{corollary381}
Two ADMGs $\mathcal{G}_{1}$ and $\mathcal{G}_{2}$ are ordinary Markov equivalent if and only ${\mathcal{S}}_{3}(\mathcal{G}_{1})={\mathcal{S}}_{3}(\mathcal{G}_{2})$, and this occurs if and only if $\Tilde{\mathcal{S}}_{3}(\mathcal{G}_{1})=\Tilde{\mathcal{S}}_{3}(\mathcal{G}_{2})$.
\end{corollary381}

\begin{proof}
By Proposition 3.8, $\mathcal{S}_{3}$ are preserved in $(\mathcal{G}_{1})^{m}$ and $(\mathcal{G}_{2})^{m}$, and with the new definition of adjacencies, the outputs of  $\Tilde{\mathcal{S}}_{3}$ are also preserved. Hence the statement follows from Corollary 3.2.1.
\end{proof}
\section{Extension to Summary Graphs and MAGs with undirected edges}
MAGs defined in \citet{richardson2002} contain undirected edges which necessitate additional conditions of ancestrality. In addition to the previous condition ($\sib_{\mathcal{G}}(v) \cap \an_{\mathcal{G}}(v) = \emptyset$ and this is referred as condition 1 of ancestrality), one also requires that if an undirected edge is present between two vertices $v$ and $w$ then there is no arrow into $v$ or $w$. We refer to this as condition 2 of ancestrality. 

\begin{definition}
A graph $\mathcal{G}$ is \emph{ancestral} if: (1) for every $v \in \mathcal{V}$, $\sib_{\mathcal{G}}(v) \cap \an_{\mathcal{G}}(v) = \emptyset$; (2) if there is an undirected edge $x-y$ then $x,y$ have no parents and no siblings.
\end{definition}

A direct consequence of this definition is that vertices with undirected edges are `at the top' of the graph $\mathcal{G}$. For an acyclic graph $\mathcal{G}$ with three types of edges and only satisfying condition 2 of ancestrality, it can be seen as an ADMG with an undirected component among vertices without parents or siblings and therefore the component is "at the top" of the graph.
\par
\emph{Summary graphs} defined in \citet{wermuth2011} are actually the same as ADMGs with undirected components at the top. Graphically, one just needs to change the dashed lines to bidirected edges and they encode the same conditional independence. For simplicity, we will refer to this type of graphs as summary graphs.  Among the three graphs in Figure \ref{summarygraph}, (ii) is the only summary graph.
\begin{definition}
For a summary graph $\mathcal{G}$, let $U = \{v \in \mathcal{V}:v-w \text{ for some }w \in \mathcal{V}\}$ and $D = \mathcal{V} \setminus U$. Define $\mathcal{G}^{u} = \mathcal{G}_{U}$ and $\mathcal{G}^{d} = \mathcal{G}_{D}$.
\end{definition}
It is showed by \citet{richardson2002} that we can always split a summary graph into two disjoint subgraphs. One is an undirected subgraph $\mathcal{G}^{u}$ and another one is a subgraph with only directed and bidirected edges $\mathcal{G}^{d}$. Note that heads and barren sets are only defined in $\mathcal{G}^{d}$, and tails may include vertices in both $\mathcal{G}^{u}$ and $\mathcal{G}^{d}$. 
\par
For example, Figure \ref{summarygraph}(ii) can be split as $\mathcal{G}^{u} = 1-2$ and $\mathcal{G}^{d} = 3 \leftarrow 4, 3 \leftrightarrow 4$. Its heads are $\{3\}$ and $\{4\}$ and the corresponding tails are $\{2,4\}$ and $\{2\}$. 
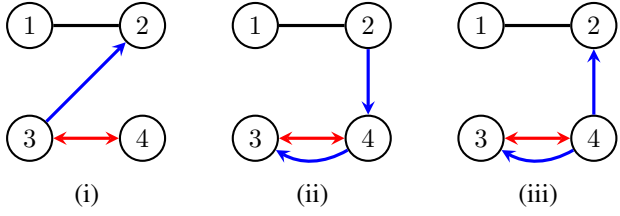
\begin{figure}
  \begin{tikzpicture}
  [rv/.style={circle, draw, thick, minimum size=6mm, inner sep=0.8mm}, node distance=15mm, >=stealth]
  \pgfsetarrows{latex-latex};
\begin{scope}
  \node[rv]  (1)            {$1$};
  \node[rv, right of=1] (2) {$2$};
  \node[rv, below of=1] (3) {$3$};
  \node[rv, right of=3] (4) {$4$};
  \draw[-, very thick] (1) -- (2);
  \draw[<->, very thick, red] (3) -- (4);
  \draw[->, very thick, color=blue] (3) -- (2);
  \node[below right of=3,xshift=-0.3cm, yshift=0.3cm] {(i)};
  \end{scope}
\begin{scope}[xshift = 3cm]
  \node[rv]  (1)            {$1$};
  \node[rv, right of=1] (2) {$2$};
  \node[rv, below of=1] (3) {$3$};
  \node[rv, right of=3] (4) {$4$};
  \draw[-, very thick] (1) -- (2);
  \draw[<->, very thick, red] (3) -- (4);
  \draw[->, very thick, color=blue] (2) -- (4);
  \draw[->,very thick, blue] (4) to [bend left=30] (3);
  \node[below right of=3,xshift=-0.3cm, yshift=0.3cm] {(ii)};
  \end{scope}
 \begin{scope}[xshift = 6cm]
  \node[rv]  (1)            {$1$};
  \node[rv, right of=1] (2) {$2$};
  \node[rv, below of=1] (3) {$3$};
  \node[rv, right of=3] (4) {$4$};
  \draw[-, very thick] (1) -- (2);
  \draw[<->, very thick, red] (3) -- (4);
  \draw[->, very thick, color=blue] (4) -- (2);
  \draw[->,very thick, blue] (4) to [bend left=30] (3);
  \node[below right of=3,xshift=-0.3cm, yshift=0.3cm] {(iii)};
  \end{scope}
\end{tikzpicture}
 \caption{(i) A graph that satisfies only condition 1 of ancestrality. (ii) A graph that satisfies only condition 2 of ancestrality. (iii) A graph that does not satisfy either condition 1 or 2 of ancestrality. }
 \label{summarygraph}
\end{figure}
\par
A vertex $a$ is said to be \emph{anterior} to $b$ if there is a path $\pi$ on which every edge is either undirected or directed towards $b$, or if $a=b$.  We denote the collection of all vertices anterior to $b$ by $\ant_{\mathcal{G}}(b)$.
\par
An \emph{undirected graph} (UG) is a graph with only undirected edges. A \emph{clique} in an UG is defined as a complete subset of vertices, that is: every pair of vertices is connected by an undirected edge. 
\par
For summary graphs, including MAGs, a \emph{clique} is defined in the same manner for vertices in $\mathcal{G}^u$, with completeness referring only to adjacencies by undirected edges. 
\begin{remark}
We extend the definition of parametrizing set by adding all the cliques to the set.
\end{remark}

\subsection{Extension to MAGs With Undirected Edges}

We only need to add a few line of argument to extend previous propositions and theorems.
\par
For $\Rightarrow$ of Proposition 3.3: if $W \in \mathcal{S}(\mathcal{G})$, then either $W$ is a clique or there is a nonempty subset $W' \subseteq W$ such that $W'$ is a head and $W \subseteq W' \cup \tail(W')$. The latter case is proved in the main paper. For the former case, it clearly implies that we can not m-separate any two vertices in $W$, given the remaining vertices in $W$.
\par
For $\Leftarrow$ of Proposition 3.3: For $W$ that does not lie entirely in $\mathcal{G}^{u}$ we can define $W'$ = $\barren(W)$. For $W$ lying in $\mathcal{G}^{u}$, if we cannot m-separate any two vertices in $W$ then clearly $W$ is a clique and $W \in \mathcal{S}(\mathcal{G})$.
\par
Proposition 3.4 does not change if we add undirected edges in MAGs, thus Theorem 3.2 and Corollary 3.2.1 hold for MAGs with undirected edges.

\subsection{Extension to Summary Graphs}
The projection described in Section 3.3 can be extend to summary graphs with latent variables $L$ as stated in \citet{richardson2002}. The modified projection is: (i) every pair of vertices $a,b \in \mathcal{V}$ in $\mathcal{G}$ that are connected by an \emph{inducing path} becomes adjacent in $\mathcal{G}^{m}$; (ii) an edge connecting $a,b$ in $\mathcal{G}^{m}$ is oriented as follows: if $a \in \ant_{\mathcal{G}}(b)$ then $a \rightarrow b$; if $b \in \ant_{\mathcal{G}}(a)$ then $b \rightarrow a$; if neither is the case, then $a \leftrightarrow b$; if they are both anterior to one another then the edge is undirected. An \emph{inducing path} between $a,b$ is a path such that every collider in the path is in $\an(\{a,b\})$, and every noncollider is in $L$. Again, we only consider projections with no latent variable, so an inducing path is just a collider path with every collider in $\an(\{a,b\})$. And the projection still preserves ancestral relations from the original graph. We first show that undirected edges are preserved through projections.
\begin{lemma}\label{unpreserv}
If $\mathcal{G}$ is a summary graph and $\mathcal{G}^{m}$ is its corresponding projected MAG,  then $\mathcal{G}^{u}=(\mathcal{G}^{m})^{u}$ and $(\mathcal{G}^{d})^{m}=(\mathcal{G}^{m})^{d}$.
\end{lemma}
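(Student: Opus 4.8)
\textbf{Proof proposal for Lemma \ref{unpreserv}.}

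The plan is to show the two claimed equalities by analysing how the modified projection acts on vertices of $\mathcal{G}^u$ versus vertices of $\mathcal{G}^d$, and the crucial observation is that the undirected part of a summary graph sits ``at the top'': every vertex in $U$ has no parents and no siblings, so in particular no vertex of $D$ is anterior-or-ancestral in a way that creates an arrowhead into $U$. First I would prove $\mathcal{G}^u \subseteq (\mathcal{G}^m)^u$: take an undirected edge $v - w$ in $\mathcal{G}$, so $v,w \in U$ and $v,w$ are anterior to one another (the single edge is a trivial anterior path in both directions); the edge is itself a trivial inducing path, so $v,w$ are adjacent in $\mathcal{G}^m$, and since the projection preserves anteriority in both directions the edge is undirected in $\mathcal{G}^m$, i.e.\ $v-w$ in $(\mathcal{G}^m)^u$. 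Conversely, for $(\mathcal{G}^m)^u \subseteq \mathcal{G}^u$, suppose $v-w$ in $\mathcal{G}^m$; then there is an inducing path $\pi$ between $v$ and $w$ in $\mathcal{G}$ with $v \in \ant_{\mathcal{G}}(w)$ and $w \in \ant_{\mathcal{G}}(v)$. I would argue that $v,w \in U$: a vertex in $D$ has a directed anterior path into it only, and more importantly mutual anteriority between $v$ and $w$ forces a path made of undirected edges and edges oriented both ways, which in an acyclic graph satisfying condition~2 is only possible if both endpoints lie in the undirected component. To make this precise I would use acyclicity together with the structure of anterior paths: if $v \in \ant_{\mathcal{G}}(w)$ via a path with a directed edge, that edge points toward $w$; mutual anteriority then yields a closed anterior walk, and the only way to avoid a directed cycle is for all the edges on the relevant walk to be undirected, placing $v$ and $w$ in $U$. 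Once $v,w\in U$, the inducing path $\pi$ must lie entirely in $\mathcal{G}^u$ because colliders on an inducing path would need arrowheads, contradicting that $U$-vertices have no siblings and no parents; hence $\pi$ is a single undirected edge $v-w$ in $\mathcal{G}$.

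For the second equality $(\mathcal{G}^d)^m = (\mathcal{G}^m)^d$, I would first note that by the first part the vertex set of $(\mathcal{G}^m)^d$ equals $D$, the vertex set of $\mathcal{G}^d$, so it remains to match edges. Take a directed or bidirected edge of $(\mathcal{G}^m)^d$, i.e.\ an edge of $\mathcal{G}^m$ between two vertices of $D$; it arises from an inducing path $\pi$ in $\mathcal{G}$ between two $D$-vertices. I would argue $\pi$ stays inside $D$: any time $\pi$ entered $U$ it would have to pass through a $U$-vertex as a noncollider (since $U$-vertices admit no arrowheads), but a noncollider $u\in U$ on $\pi$ has the form $\lqarrow u \rqarrow$ with at least one arrowhead at $u$ unless both incident edges are undirected — and the neighbours of $u$ along $\pi$ then also lie in $U$, so by induction the whole path would be in $U$, contradicting that its endpoints are in $D$. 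Hence $\pi$ is an inducing path of $\mathcal{G}^d$, and its orientation in $\mathcal{G}^m$ is determined by ancestral relations within $D$, which coincide with those in $\mathcal{G}^d$ (anteriority restricted to $D$ is just ancestry). This gives one edge of $(\mathcal{G}^d)^m$; the reverse inclusion is symmetric, using Lemma~3.5 / Lemma \ref{lem:2.1} to carry collider paths of $\mathcal{G}^d$ into $(\mathcal{G}^d)^m$ and noting these are also inducing paths of the full graph $\mathcal{G}$.

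The main obstacle I anticipate is the careful bookkeeping around ``noncolliders in $U$'' on an inducing path: one has to rule out that an inducing path sneaks from $D$ up into $U$ and back, and the argument relies entirely on the ancestrality condition~2 (no arrowheads at $U$-vertices) plus acyclicity. I would isolate this as a small claim: \emph{any inducing path in a summary graph $\mathcal{G}$ whose endpoints lie in $D$ has all its vertices in $D$, and any inducing path whose endpoints lie in $U$ is a single undirected edge.} Everything else — preservation of anteriority by the projection, the orientation rules, and the identification of $D$ with the vertex set of $(\mathcal{G}^m)^d$ — is then routine given Lemma~3.5 and the description of the modified projection.
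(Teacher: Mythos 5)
Your proposal is correct and follows essentially the same route as the paper's own (terser) proof: both rest on the observation that $\mathcal{G}^u$ sits at the top of the graph, so mutual anteriority forces both endpoints into $U$ (via condition~2 plus acyclicity), no new adjacencies can arise within $U$, inducing paths between vertices of $D$ cannot pass through $U$ because internal vertices must be colliders and $U$-vertices admit no arrowheads, and anteriority restricted to $D$ coincides with ancestry in $\mathcal{G}^d$. Your isolated claim about inducing paths is exactly the content the paper states in passing, just spelled out in more detail.
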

\begin{proof}
For the first statement, we can prove it by showing that undirected edges are the same. First of all, notice that all undirected edges in $\mathcal{G}$ is preserved in $\mathcal{G}^{m}$. Secondly, no additional undirected edges can be added. If $a$ and $b$ are both in $\mathcal{G}^{u}$ then if they are not adjacent before, they are still nonadjacent since there is no inducing path between them (they are already at the top of the graph). If $a$ and $b$ are both in $\mathcal{G}^{d}$ then they cannot be anterior to each other, this would violate condition (ii) of ancestrality or the fact that $\mathcal{G}$ is acyclic. If $a \in \mathcal{G}^{u}$ and $b \in \mathcal{G}^{d}$ then obviously $b$ cannot be anterior to $a$.
\par
For the second statement, note the two subgraphs have the same vertices due to the first statement. For vertices in $\mathcal{G}^{d}$, ancestral relations are the same in $\mathcal{G}$ as there is no directed path passing $\mathcal{G}^{u}$. Also when we consider inducing paths, any such path would not contain any vertex in $\mathcal{G}^{u}$.
\end{proof}

We now show that Proposition 3.6 also holds for summary graphs, i.e.\ heads and tails are preserved through projection. 

\begin{proof}
So we have proved that for ADMGs, heads and tails are preserved through the projection. Now heads are only defined in $\mathcal{G}^{d}$ and $(\mathcal{G}^{m})^{d}$, thus by Lemma $\ref{unpreserv}$, for a summary graph, heads are preserved in $\mathcal{G}^{m}$. Also for tails that are in $\mathcal{G}^{d}$, they are preserved. It remains to show that the result holds when tails are in $\mathcal{G}^{u}$. For a head $H$, let $w \in \mathcal{G}^{u}$. If $w \in \tail_{\mathcal{G}}(H)$ then we know there is a path $\pi: w \rightarrow w_{1} \leftrightarrow \cdots \leftrightarrow h$, for $h \in H$ with intermediate vertices in $\an(H)$. Although $w \notin \mathcal{G}^{d}$, with the same argument in Lemma 3.5, this path is preserved as a collider path in $\an(H)$ in $\mathcal{G}^{m}$ with $\leftrightarrow h$ ($h$ is in a head) hence $w \in \tail_{\mathcal{G}^{m}}(H)$. Suppose now $w \in \tail_{\mathcal{G}^{m}}(H)$, so there is a path $\pi: w \rightarrow w_{1} \leftrightarrow \cdots \leftrightarrow h$ with intermediate vertices in $\an(H)$, we know every bidirected edge corresponds to a bidirected path in $\an(H)$ in $\mathcal{G}$, and the first directed edge correspond to a path $\pi': w \rightarrow w_{1} \leftrightarrow \cdots \leftrightarrow w_{2}$ in $\mathcal{G}$ with intermediate vertices in $\an(w_{2}) \subseteq \an(H)$, thus $w \in \tail_{\mathcal{G}}(H)$.
\end{proof}

Since Proposition 3.6 holds for summary graphs, if we change the definition of \emph{adjacencies} in summary graphs in the same manner as ADMGs by referring to m-separations, Theorem 3.8 and Corollary 3.8.1 also hold for summary graphs.

\subsection{Extension for Algorithms}

For Algorithm 1, we only add a line at the end of the algorithm (after line 17) to obtain the connected pairs in $\mathcal{G}^{u}$ (referred as line 18 in the next section). This costs $O(e)$ and hence does not contribute to the overall complexity. 

For Algorithm 2, as showed by Lemma \ref{unpreserv}, undirected edges are preserved, it is sufficient to add a line at the end of the algorithm (after line 9) to keep all the undirected edges. This costs $O(e)$ and hence does not contribute to the overall complexity. 

\section{Proof that Algorithm 1 outputs $\Tilde{\mathcal{S}}_3$}

Let $A_{1}(\mathcal{G})$ be the output of Algorithm 1 and $A'_{1}(\mathcal{G})$ be the output of Algorithm 1 without checking adjacencies in lines 6, 11 and 14. We also define the following sets for a MAG $\mathcal{G}$:
\begin{align*}
H_{1}(\mathcal{G}) &= \{\{v,w,z\}:v \in \mathcal{V} \text{ and }w,z \in \pa_{\mathcal{G}}(v)\}\\
H_{2}(\mathcal{G}) &= \{\{v,w,z\}:v \leftrightarrow w, z \in \tail(\{v,w\})\}\\
H^{a}_{3}(\mathcal{G}) &= \text{all heads of size 3 with some adjacencies}\\
H^{n}_{3}(\mathcal{G}) &= \text{all heads of size 3 with no adjacencies}\\
H_{3}(\mathcal{G}) &= \text{all heads of size 3} = H^{a}_{3}(\mathcal{G}) \cup H^{n}_{3}(\mathcal{G})\\
\hat{\mathcal{S}}_{3}(\mathcal{G}) &= \{S \in \mathcal{S}_{3}(\mathcal{G}):\text{there are some adjacencies in }S\}\\
U_{3}(\mathcal{G}) &= \{S \subseteq \mathcal{V}(\mathcal{G}^{u}):\abs{S} = 3 \text{ and }S\text{ is complete}\}.
\end{align*}
Thus by definition $\Tilde{\mathcal{S}}_{3}(\mathcal{G}) \subseteq \hat{\mathcal{S}}_{3}(\mathcal{G}) \subseteq \mathcal{S}_{3}(\mathcal{G})$ and $\mathcal{S}_{2}(\mathcal{G})$, $H_{1}(\mathcal{G})$, $H_{2}(\mathcal{G})$, $H^{a}_{3}(\mathcal{G})$, $H^{n}_{3}(\mathcal{G})$, $U_{3}(\mathcal{G})$ are disjoint.

\begin{lemma}\label{lem:1.3}
In a MAG $\mathcal{G}$, for any single vertex $a$, $\tail(a)=\pa_{\mathcal{G}}(a)$, and $\{v,w\}$ is a head if and only if $v \leftrightarrow w$.
\end{lemma}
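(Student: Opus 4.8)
The plan is to verify both claims directly from the definitions of heads and tails, exploiting the special structure of MAGs; namely that a MAG is ancestral (so $\sib(v) \cap \an(v) = \emptyset$ for every $v$, and there is at most one edge between any pair of vertices) and maximal.

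For the first claim, that $\tail(a) = \pa_{\mathcal{G}}(a)$ for a single vertex $a$: since $\{a\}$ is trivially a head with $\an(\{a\})$ the usual ancestor set, I would unwind the definition $\tail(a) = (\dis_{\an(a)}(a) \setminus \{a\}) \cup \pa_{\mathcal{G}}(\dis_{\an(a)}(a))$. The key observation is that $\dis_{\an(a)}(a) = \{a\}$: if some $w \neq a$ were bidirected-connected to $a$ within $\mathcal{G}_{\an(a)}$, then in particular $w \in \an_{\mathcal{G}}(a)$ and $w$ would be joined to $a$ by a bidirected path all of whose vertices lie in $\an(a)$; the vertex adjacent to $a$ on that path would be a sibling of $a$ that is also an ancestor of $a$, contradicting ancestrality (and even a single edge $w \leftrightarrow a$ with $w \in \an(a)$ is already forbidden). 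Hence $\dis_{\an(a)}(a) \setminus \{a\} = \emptyset$ and $\tail(a) = \pa_{\mathcal{G}}(\{a\}) = \pa_{\mathcal{G}}(a)$, as required.

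For the second claim, that $\{v,w\}$ is a head if and only if $v \leftrightarrow w$: the $\Leftarrow$ direction is the easier one. If $v \leftrightarrow w$, then by ancestrality neither is an ancestor of the other, so $\barren_{\mathcal{G}}(\{v,w\}) = \{v,w\}$; also $\an(\{v,w\})$ contains both $v$ and $w$ and the edge $v \leftrightarrow w$, so they lie in a single district of $\mathcal{G}_{\an(\{v,w\})}$; thus $\{v,w\}$ satisfies both head conditions. For $\Rightarrow$, suppose $\{v,w\}$ is a head. Then $\barren_{\mathcal{G}}(\{v,w\}) = \{v,w\}$ forces that neither $v$ nor $w$ is an ancestor of the other (otherwise one would be removed by the barren operation), so $v \notin \an(w)$ and $w \notin \an(v)$; in particular no directed edge joins them. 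Since $\{v,w\}$ lies in a single district of $\mathcal{G}_{\an(\{v,w\})}$, there is a bidirected path $\pi$ between $v$ and $w$ inside $\mathcal{G}_{\an(\{v,w\})}$. If $\pi$ has an internal vertex $u$, then $u \in \an(\{v,w\})$, so $u \in \an(v)$ or $u \in \an(w)$; but $u$ is a sibling of its neighbours on $\pi$, and chasing this along $\pi$ one reaches a vertex that is simultaneously a sibling of, and an ancestor of, $v$ or $w$ — contradicting ancestrality. Hence $\pi$ has no internal vertex, i.e.\ $\pi$ is the single edge $v \leftrightarrow w$.

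The main obstacle, such as it is, will be the careful bookkeeping in the $\Rightarrow$ direction of the second claim: one must argue that a nontrivial bidirected path among ancestors of $\{v,w\}$ always produces a sibling-ancestor violation. The cleanest way is probably to take the internal vertex of $\pi$ that is closest (along $\pi$) to whichever endpoint it is an ancestor of, and observe that its neighbour on that side is then a sibling lying on a directed path to that endpoint, so it too is an ancestor of the endpoint while being a sibling of it or of another ancestor — iterating (or taking the appropriate extreme vertex) yields an explicit sibling-and-ancestor pair. Everything else is a direct unfolding of definitions, using only that MAGs are ancestral; maximality is not even needed here.
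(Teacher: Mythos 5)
Your proof of $\tail(a)=\pa_{\mathcal{G}}(a)$ and of the $\Leftarrow$ direction of the second claim is correct and matches the paper. The gap is in the $\Rightarrow$ direction of the second claim, and precisely in your closing remark that ``maximality is not even needed here.'' Ancestrality alone does \emph{not} force the bidirected path $\pi$ to be a single edge: your chasing argument breaks down at the point where $\pi$ switches from ancestors of one endpoint to ancestors of the other. Concretely, take $v \leftrightarrow u_1 \leftrightarrow u_2 \leftrightarrow w$ together with $u_1 \rightarrow w$ and $u_2 \rightarrow v$; this is exactly the graph of Figure~\ref{MAGs}(i) with $(v,u_1,u_2,w)=(1,3,4,2)$. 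Here $u_1$ is a sibling of $v$ and $u_2$ but an ancestor only of $w$, while $u_2$ is a sibling of $u_1$ and $w$ but an ancestor only of $v$, so no vertex is simultaneously a sibling and an ancestor of the same vertex and the graph is ancestral. Yet $\{v,w\}$ satisfies both defining conditions of a head: $\barren(\{v,w\})=\{v,w\}$, and $v,w$ lie in a single district of $\mathcal{G}_{\an(\{v,w\})}$ via the path above — while $v$ and $w$ are not adjacent at all. (Being a sibling of an ancestor of $v$ does not make a vertex an ancestor of $v$, which is where your ``chasing'' step silently fails.) So no amount of bookkeeping along $\pi$ can yield the sibling--ancestor contradiction you want.

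What rescues the statement is maximality, which is exactly what your plan discards: the graph above is not maximal, since $v$ and $w$ cannot be m-separated by any subset of $\{u_1,u_2\}$. This is how the paper argues the $\Rightarrow$ direction: if $\{v,w\}$ is a head then $\{v,w\}\in\mathcal{S}(\mathcal{G})$, so by Proposition~\ref{Prodiscriminatinggraph}(i) (which rests on Proposition~\ref{msepaandS} together with maximality) $v$ and $w$ must be adjacent in a MAG; since a head has no ancestral relation between its members, the edge cannot be directed, hence $v \leftrightarrow w$. Equivalently, in graphical terms, your path $\pi$ is an inducing path, and it is maximality — not ancestrality — that forbids an inducing path between nonadjacent vertices. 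To repair your proof, replace the chasing argument by an appeal to maximality in one of these forms.
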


\begin{proof}
If $ a \subset \dis_{\an(a)}(a)$ then there is a vertex $b$ such that $b \leftrightarrow a$ and $b \in \an_{\mathcal{G}}(a)$, which contradicts ancestrality. Hence $\tail(a)=\pa_{\mathcal{G}}(a)$.
\par
If $v \leftrightarrow w$ then $v,w$ have no ancestral relation so by definition, it is a head. Suppose $\{v,w\}$ is a head, so $\{v,w\} \in \mathcal{S}(\mathcal{G})$ then they must be adjacent by Proposition 3.4 and the adjacency can not be undirected or directed, thus $v \leftrightarrow w$.
\end{proof}

Thus $H_{1}(\mathcal{G})$ and $H_{2}(\mathcal{G})$ are precisely the sets in $\mathcal{S}_{3}(\mathcal{G})$ that arise from heads of size one and two, respectively.

\begin{lemma}\label{Sequality}
For a MAG $\mathcal{G}$, we have
\begin{align*}
    \mathcal{S}_{3}(\mathcal{G}) &= \mathcal{S}_{2}(\mathcal{G}) \cup H_{1}(\mathcal{G}) \cup H_{2}(\mathcal{G}) \cup H_{3}(\mathcal{G}) \cup U_{3}(\mathcal{G})\\
    \hat{\mathcal{S}}_{3}(\mathcal{G}) &= \mathcal{S}_{2}(\mathcal{G}) \cup H_{1}(\mathcal{G}) \cup H_{2}(\mathcal{G}) \cup H^{a}_{3}(\mathcal{G}) \cup U_{3}(\mathcal{G}).
\end{align*}
\end{lemma}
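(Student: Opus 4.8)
The plan is to prove each identity by double inclusion, organised by the size of the head that generates a given member of the parametrizing set. The tools used are Lemma~\ref{lem:1.3} (which, in a MAG, identifies the heads of size one and two together with their tails), Proposition~\ref{Prodiscriminatinggraph}(i) (in a MAG, $\{a,b\}\in\mathcal{S}(\mathcal{G})$ iff $a,b$ are adjacent), the disjointness of $\mathcal{S}_{2},H_{1},H_{2},H^{a}_{3},H^{n}_{3},U_{3}$ recorded above, and the extended definition of the parametrizing set under which every clique of $\mathcal{G}^{u}$ is adjoined to $\mathcal{S}(\mathcal{G})$. For the ``$\supseteq$'' inclusion in the first identity, $\mathcal{S}_{2}(\mathcal{G})\subseteq\mathcal{S}_{3}(\mathcal{G})$ holds by definition; if $w,z$ are distinct parents of $v$ then $\{v\}$ is trivially a head with $\tail(\{v\})=\pa_{\mathcal{G}}(v)$ by Lemma~\ref{lem:1.3}, so $\{v,w,z\}=\{v\}\cup\{w,z\}$ is a size-$3$ element of $\mathcal{S}(\mathcal{G})$, giving $H_{1}(\mathcal{G})\subseteq\mathcal{S}_{3}(\mathcal{G})$; if $v\leftrightarrow w$ then $\{v,w\}$ is a head by Lemma~\ref{lem:1.3}, and adjoining $z\in\tail(\{v,w\})$ (which is disjoint from $\{v,w\}$) gives a size-$3$ element of $\mathcal{S}(\mathcal{G})$, giving $H_{2}(\mathcal{G})\subseteq\mathcal{S}_{3}(\mathcal{G})$; any head of size $3$ lies in $\mathcal{S}(\mathcal{G})$ with $A=\emptyset$; and $3$-cliques of $\mathcal{G}^{u}$ lie in $\mathcal{S}_{3}(\mathcal{G})$ by the extended definition.

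For the ``$\subseteq$'' inclusion, take $S\in\mathcal{S}_{3}(\mathcal{G})$. If $|S|=2$ then $S\in\mathcal{S}_{2}(\mathcal{G})$. If $|S|=3$, then either $S$ is a clique of $\mathcal{G}^{u}$, in which case $S\in U_{3}(\mathcal{G})$, or $S=H\cup A$ for a nonempty head $H$ with $A\subseteq\tail(H)$; since a head and its tail are disjoint, $|H|+|A|=3$ with $|H|\in\{1,2,3\}$. If $|H|=3$ then $A=\emptyset$ and $S=H\in H_{3}(\mathcal{G})$. If $|H|=2$, write $H=\{v,w\}$: Lemma~\ref{lem:1.3} forces $v\leftrightarrow w$, and $A=\{z\}$ with $z\in\tail(\{v,w\})$, so $S\in H_{2}(\mathcal{G})$. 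If $|H|=1$, write $H=\{v\}$: Lemma~\ref{lem:1.3} gives $\tail(\{v\})=\pa_{\mathcal{G}}(v)$, so $A=\{w,z\}\subseteq\pa_{\mathcal{G}}(v)$ and $S\in H_{1}(\mathcal{G})$. This establishes the first identity.

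The second identity then follows quickly. Writing $H_{3}=H^{a}_{3}\cup H^{n}_{3}$, the first identity expresses $\mathcal{S}_{3}(\mathcal{G})$ as the disjoint union $\mathcal{S}_{2}\cup H_{1}\cup H_{2}\cup H^{a}_{3}\cup H^{n}_{3}\cup U_{3}$. Every set in $\mathcal{S}_{2}(\mathcal{G})$ contains an adjacent pair by Proposition~\ref{Prodiscriminatinggraph}(i); every set in $H_{1}(\mathcal{G})$ contains the adjacent pair $\{v,w\}$; every set in $H_{2}(\mathcal{G})$ contains $v\leftrightarrow w$; every set in $U_{3}(\mathcal{G})$ is complete; and among heads of size $3$, those in $H^{a}_{3}(\mathcal{G})$ have an adjacency whereas those in $H^{n}_{3}(\mathcal{G})$ do not. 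Hence intersecting the displayed decomposition with the condition ``$S$ contains some adjacency'' deletes exactly $H^{n}_{3}(\mathcal{G})$ and retains everything else, which is precisely $\hat{\mathcal{S}}_{3}(\mathcal{G})=\mathcal{S}_{2}\cup H_{1}\cup H_{2}\cup H^{a}_{3}\cup U_{3}$.

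The argument is essentially bookkeeping, so I do not anticipate a genuine obstacle. The two points that need mild care are: confirming the cardinality arithmetic, so that $|S|=3$ with $S=H\cup A$ and $H\cap A=\emptyset$ genuinely splits into exactly the cases $|H|=1,2,3$ (with the complementary-size piece of $A$ drawn from $\tail(H)$); and checking that the clique contribution $U_{3}(\mathcal{G})$ does not overlap the head contributions, which holds because heads and tails are defined inside $\mathcal{G}^{d}$ while $U_{3}(\mathcal{G})\subseteq\mathcal{V}(\mathcal{G}^{u})$, and $\mathcal{V}(\mathcal{G}^{u})$ and $\mathcal{V}(\mathcal{G}^{d})$ are disjoint whereas a head is nonempty.
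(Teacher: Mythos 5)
Your proof is correct and follows essentially the same route as the paper's: split $\mathcal{S}_{3}$ by whether the set is a clique of $\mathcal{G}^{u}$ or arises from a head of size $1$, $2$ or $3$ (using Lemma~\ref{lem:1.3} for the size-$1$ and size-$2$ cases), then obtain the second identity by noting that $H^{n}_{3}$ is exactly the part with no adjacencies. Your version just spells out the double inclusion and the head/tail disjointness bookkeeping that the paper leaves implicit.
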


\begin{proof}
Consider the first equality, for $S = \{v,w\} \in \mathcal{S}_{3}(\mathcal{G})$, by Proposition 3.4, $v,w$ are adjacent in $\mathcal{G}$ so $S \in \mathcal{S}_{2}$; For $S \in \mathcal{S}_{3}(\mathcal{G})$ and $\abs{S}=3$, it is a clique in $\mathcal{G}^{u}$ or it origins from heads of size either 1 or 2 or 3. Thus by Lemma 4.1 and Lemma 4.1, $S \in H_{1}(\mathcal{G}) \cup H_{2}(\mathcal{G}) \cup H_{3}(\mathcal{G}) \cup U_{3}(\mathcal{G})$; For $S$ in the right hand side, it is in $\mathcal{S}_{3}(\mathcal{G})$ by definition.
\par
For the second equality, by definition $\hat{\mathcal{S}}_{3}(\mathcal{G})$ excludes all $S \in \mathcal{S}_{3}(\mathcal{G})$ that have no adjacencies, but note that all $S \in \mathcal{S}_{2}(\mathcal{G}) \cup H_{1}(\mathcal{G}) \cup H_{2}(\mathcal{G}) \cup U_{3}(\mathcal{G})$ have some adjacencies. And by definition $H^{a}_{3}(\mathcal{G})$ extract all heads of size 3 with some adjacencies.
\end{proof}

\begin{lemma}\label{A'}
For a MAG $\mathcal{G}$, $A'_{1}(\mathcal{G}) \cup U_{3}(\mathcal{G}) = \hat{\mathcal{S}}_{3}(\mathcal{G})$.
\end{lemma}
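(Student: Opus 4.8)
\textbf{Proof plan for Lemma \ref{A'}.}

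The plan is to show the two inclusions $A'_1(\mathcal{G}) \cup U_3(\mathcal{G}) \subseteq \hat{\mathcal{S}}_3(\mathcal{G})$ and $\hat{\mathcal{S}}_3(\mathcal{G}) \subseteq A'_1(\mathcal{G}) \cup U_3(\mathcal{G})$ separately, using the decomposition of $\hat{\mathcal{S}}_3(\mathcal{G})$ supplied by Lemma \ref{Sequality}, namely $\hat{\mathcal{S}}_3(\mathcal{G}) = \mathcal{S}_2(\mathcal{G}) \cup H_1(\mathcal{G}) \cup H_2(\mathcal{G}) \cup H^a_3(\mathcal{G}) \cup U_3(\mathcal{G})$. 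Since the line added to Algorithm 1 for the undirected case contributes exactly $U_3(\mathcal{G})$ (the complete triples in $\mathcal{G}^u$), and $A'_1$ is the output of Algorithm 1 \emph{without} the adjacency checks in lines 6, 11 and 14, it suffices to match the remaining output of $A'_1$ against $\mathcal{S}_2(\mathcal{G}) \cup H_1(\mathcal{G}) \cup H_2(\mathcal{G}) \cup H^a_3(\mathcal{G})$. I would track the algorithm block by block: lines 2--5 produce exactly the adjacent pairs $\{v,w\}$ with $w \in \pa_\mathcal{G}(v)$; lines 8--9 produce exactly the bidirected pairs $\{v,w\}$; together with Lemma \ref{lem:1.3} and Proposition \ref{Prodiscriminatinggraph}(i) these are exactly $\mathcal{S}_2(\mathcal{G})$. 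Lines 6--7 (without the adjacency filter) produce exactly $H_1(\mathcal{G}) = \{\{v,w,z\} : v \in \mathcal{V},\, w,z \in \pa_\mathcal{G}(v)\}$ by definition. Lines 11--12 (without the filter) produce $\{\{v,w,z\} : v \leftrightarrow w,\, z \in \tail(\{v,w\})\} = H_2(\mathcal{G})$, again essentially by definition, once one notes line 10 correctly computes $\tail(\{v,w\})$.

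The substantive content is the claim that lines 13--17 (with the adjacency check in line 14 removed) enumerate exactly $H^a_3(\mathcal{G})$, the heads of size $3$ that have at least one adjacency. Here I would argue as follows. A head of size $3$ is a barren, bidirected-connected set $\{v,w,z\}$ in $\mathcal{G}_{\an(\{v,w,z\})}$; being barren in a MAG forces the internal structure among $\{v,w,z\}$ to involve bidirected edges into each vertex from the district, so each of $v,w,z$ is in $\dis_{\an(\{v,w,z\})}$ of the others. The key reduction is that any head of size $3$ containing an edge must contain a \emph{bidirected} edge (a purely directed or undirected adjacency among two of the three contradicts the barren/district condition, much as in Lemma \ref{lem:1.3}); call that edge $v \leftrightarrow w$. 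Then the third vertex $z$ must be a sibling of some ancestor of $\{v,w\}$, must lie in $\dis_\mathcal{G}(v)$, and must not itself be an ancestor or descendant of $\{v,w\}$ (otherwise barrenness or the head/district structure fails) — which is precisely the candidate set scanned in line 13. The final membership test in line 16, $z \in \dis_{\an(\{v,w,z\})}(v)$, is then exactly the condition that $\{v,w,z\}$ is bidirected-connected in its own ancestral subgraph, i.e.\ is a head. Conversely every $\{v,w,z\} \in H^a_3(\mathcal{G})$ is generated this way when the loop reaches the bidirected edge it contains. I would also check the bookkeeping point that a head of size $3$ with two bidirected edges is generated (possibly more than once, which is harmless for set membership) and that no spurious triple escapes the district test.

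The main obstacle I expect is the careful case analysis in lines 13--17: precisely characterizing which triples can be heads of size $3$ with an adjacency, verifying that line 13's candidate set $\sib_\mathcal{G}(\an_\mathcal{G}(\{v,w\})) \cap \dis_\mathcal{G}(v) \setminus (\an_\mathcal{G}(\{v,w\}) \cup \de_\mathcal{G}(\{v,w\}))$ is neither too small (misses a genuine head) nor matters if it is too large (the test in line 16 catches the rest), and handling the interaction with ancestrality so that no directed or undirected edge can sneak into a size-$3$ head. Once that is pinned down, the rest is routine matching of algorithm output against the set definitions, and assembling the two inclusions via Lemma \ref{Sequality} completes the proof.
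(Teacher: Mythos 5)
Your proposal follows essentially the same route as the paper's proof: decompose $\hat{\mathcal{S}}_{3}(\mathcal{G})$ via Lemma \ref{Sequality} and match the output of each block of Algorithm 1 against $\mathcal{S}_{2}(\mathcal{G})$, $H_{1}(\mathcal{G})$, $H_{2}(\mathcal{G})$ and $H^{a}_{3}(\mathcal{G})$; in fact you supply the substantive detail (why any adjacency inside a size-3 head must be bidirected, why line 13's candidate set is large enough, and why the district test in line 16 certifies a head) that the paper compresses into a one-line correspondence, and that part of your argument is sound.

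One bookkeeping slip is worth fixing. The line appended to Algorithm 1 for the undirected case collects the connected \emph{pairs} of $\mathcal{G}^{u}$, so its contribution lands in $\mathcal{S}_{2}(\mathcal{G})$, not in $U_{3}(\mathcal{G})$; the triangles $U_{3}(\mathcal{G})$ are precisely what the algorithm never outputs, which is why the lemma is stated as $A'_{1}(\mathcal{G}) \cup U_{3}(\mathcal{G}) = \hat{\mathcal{S}}_{3}(\mathcal{G})$ rather than $A'_{1}(\mathcal{G}) = \hat{\mathcal{S}}_{3}(\mathcal{G})$. Under your reading, the reverse inclusion would leave the undirected adjacent pairs of $\mathcal{S}_{2}(\mathcal{G})$ unaccounted for when $\mathcal{G}$ has an undirected component (your lines 2--5 and 8--9 only yield the directed and bidirected pairs). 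For MAGs without undirected edges this is vacuous and your argument is complete; in the general case the repair is simply to assign the appended line's output to $\mathcal{S}_{2}(\mathcal{G})$ and let the explicit union with $U_{3}(\mathcal{G})$ supply the undirected triangles.
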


\begin{proof}
$S \in A'_{1}(\mathcal{G})$ obtained at line 5, 7, 9, 12, 17 and 18, correspond to sets in 
$\mathcal{S}_{2}(\mathcal{G})$, $H_{1}(\mathcal{G})$, $\mathcal{S}_{2}(\mathcal{G})$, $H_{2}(\mathcal{G})$, $H^{a}_{3}(\mathcal{G})$ and $\mathcal{S}_{2}(\mathcal{G})$, respectively. So by Lemma \ref{Sequality}, $A'_{1}(\mathcal{G}) \cup U_{3}(\mathcal{G}) \subseteq \hat{\mathcal{S}}_{3}(\mathcal{G})$ Conversely, all sets in $\hat{\mathcal{S}}_{3}(\mathcal{G}) \setminus U_{3}(\mathcal{G})$ can be obtained at corresponding lines.
\end{proof}

\begin{proposition41}
For a MAG $\mathcal{G}$, $A_{1}(\mathcal{G}) = \Tilde{\mathcal{S}}_{3}(\mathcal{G})$.
\end{proposition41}
 
 \begin{proof}
 Compared to $\hat{\mathcal{S}}_{3}(\mathcal{G})$, $\Tilde{\mathcal{S}}_{3}(\mathcal{G})$ excludes all sets of size 3 that have 3 adjacencies. If the set is clique in $\mathcal{G}^{u}$ except for edges, it is not added in Algorithm 1. Otherwise note that when sets of size 3 are obtained, 
lines 6, 11 and 14 check their adjacencies.
\end{proof}
\par
Notice that Algorithm 1 naturally identifies $\hat{\mathcal{S}}_{3}(\mathcal{G}) \setminus U_{3}(\mathcal{G})$, but to obtain the full $\hat{\mathcal{S}}_{3}(\mathcal{G})$ one also needs to identify all triangles in the undirected component; $\Tilde{\mathcal{S}}_{3}(\mathcal{G})$ excludes this set. 
\begin{proposition42}
Let $A_i$ be the number of ancestors of the vertex $i$.  Then
\begin{align*}
\mathbb{E} A_i = \left( 1 + \frac{r}{n} \right)^{i-1}.
\end{align*}
In particular,
\begin{align*}
\mathbb{E} A_n = \left( 1 + \frac{r}{n} \right)^{n-1} \longrightarrow e^r.
\end{align*}
\end{proposition42}
\begin{proof}
We proceed by induction.  The result is trivially true for $A_2 = 1 + \frac{r}{n}$.
Suppose the result holds for $A_j$.  Then
\begin{align*}
\mathbb{E} A_{j+1} &= 1 + \sum_{i=1}^{j} \mathbb{E} \mathbbm{1}_{\{i \rightarrow j+1\}} A_i\\
 &= 1 + \frac{r}{n} \sum_{i=1}^{j} \left(1 + \frac{r}{n} \right)^{i-1},
\end{align*}
using independence of the edge and $A_i$ and the induction hypothesis.  Hence
\begin{align*}
\mathbb{E} A_{j+1} &= 1 + \sum_{i=1}^{j} \sum_{k=0}^{i-1} {i-1 \choose k} \left(\frac{r}{n} \right)^{k+1}\\
&= 1 + \sum_{k=0}^{j-1} \left(\frac{r}{n} \right)^{k+1} \sum_{i=k+1}^{j-1} {i-1 \choose k} \\
&= 1 + \sum_{k=0}^{j-1} \left(\frac{r}{n} \right)^{k+1} {j \choose k+1}\\
&= 1 + \sum_{k=1}^{j} \left(\frac{r}{n} \right)^{k} {j \choose k}.
\end{align*}
by a standard result about binomial coefficients.  
This gives the result.
\end{proof}

\end{appendices}





\newpage

\bibliographystyle{abbrvnat}
\bibliography{refs}

\begin{thebibliography}{15}
\providecommand{\natexlab}[1]{#1}
\providecommand{\url}[1]{\texttt{#1}}
\expandafter\ifx\csname urlstyle\endcsname\relax
  \providecommand{\doi}[1]{doi: #1}\else
  \providecommand{\doi}{doi: \begingroup \urlstyle{rm}\Url}\fi

\bibitem[Ali et~al.(2009)Ali, Richardson, and Spirtes]{ali2009}
R.~A. Ali, T.~S. Richardson, and P.~Spirtes.
\newblock Markov equivalence for ancestral graphs.
\newblock \emph{Annals of Statistics}, 37\penalty0 (5B):\penalty0 2808--2837,
  10 2009.

\bibitem[Colombo et~al.(2012)Colombo, Maathuis, Kalisch, and
  Richardson]{colombo2012learning}
D.~Colombo, M.~H. Maathuis, M.~Kalisch, and T.~S. Richardson.
\newblock Learning high-dimensional directed acyclic graphs with latent and
  selection variables.
\newblock \emph{Annals of Statistics}, pages 294--321, 2012.

\bibitem[Evans(2018)]{Evans_2018}
R.~J. Evans.
\newblock Margins of discrete {Bayesian} networks.
\newblock \emph{The Annals of Statistics}, 46\penalty0 (6A):\penalty0
  2623–2656, Dec 2018.
\newblock ISSN 0090-5364.
\newblock \doi{10.1214/17-aos1631}.

\bibitem[Evans and Richardson(2014)]{Evans2014}
R.~J. Evans and T.~S. Richardson.
\newblock Markovian acyclic directed mixed graphs for discrete data.
\newblock \emph{Annals of Statistics}, 42\penalty0 (4):\penalty0 1452--1482,
  2014.

\bibitem[Frydenberg(1990)]{frydenberg1990chain}
M.~Frydenberg.
\newblock The chain graph markov property.
\newblock \emph{Scandinavian Journal of Statistics}, pages 333--353, 1990.

\bibitem[Pearl(2009)]{pearl2009causality}
J.~Pearl.
\newblock \emph{Causality}.
\newblock Cambridge University Press, second edition, 2009.

\bibitem[Richardson(2003)]{richardlocalmarkov}
T.~S. Richardson.
\newblock {M}arkov properties for acyclic directed mixed graphs.
\newblock \emph{Scandinavian Journal of Statistics}, 30\penalty0 (1):\penalty0
  145--157, 2003.

\bibitem[Richardson(2009)]{DBLP:journals/corr/Richardson14}
T.~S. Richardson.
\newblock A factorization criterion for acyclic directed mixed graphs.
\newblock \emph{Proceedings of the 25th Conference on Uncertainty in Artificial
  Intelligence}, pages 462--470, 01 2009.

\bibitem[Richardson and Spirtes(2002)]{richardson2002}
T.~S. Richardson and P.~Spirtes.
\newblock Ancestral graph {Markov} models.
\newblock \emph{Annals of Statistics}, 30\penalty0 (4):\penalty0 962--1030, 08
  2002.

\bibitem[Richardson et~al.(2017)Richardson, Evans, Robins, and
  Shpitser]{richardson2017nested}
T.~S. Richardson, R.~J. Evans, J.~M. Robins, and I.~Shpitser.
\newblock Nested {Markov} properties for acyclic directed mixed graphs, 2017.

\bibitem[Shpitser et~al.(2018)Shpitser, Evans, and
  Richardson]{Shpitser2018AcyclicLS}
I.~Shpitser, R.~J. Evans, and T.~S. Richardson.
\newblock Acyclic linear {SEMs} obey the nested {M}arkov property.
\newblock \emph{Proceedings of the 34th Conference on Uncertainty in Artificial
  Intelligence}, 2018.

\bibitem[Spirtes and Richardson(1997)]{Spirtes97apolynomial}
P.~Spirtes and T.~S. Richardson.
\newblock A polynomial time algorithm for determining {DAG} equivalence in the
  presence of latent variables and selection bias, 1997.

\bibitem[Spirtes et~al.(2000)Spirtes, Glymour, Scheines, and
  Heckerman]{spirtes2000causation}
P.~Spirtes, C.~N. Glymour, R.~Scheines, and D.~Heckerman.
\newblock \emph{Causation, Prediction, and Search}.
\newblock MIT Press, 2000.

\bibitem[Wermuth(2011)]{wermuth2011}
N.~Wermuth.
\newblock Probability distributions with summary graph structure.
\newblock \emph{Bernoulli}, 17\penalty0 (3):\penalty0 845--879, 08 2011.

\bibitem[Zhao et~al.(2005)Zhao, Zheng, and Liu]{Zhao2005}
H.~Zhao, Z.~Zheng, and B.~Liu.
\newblock On the {Markov} equivalence of maximal ancestral graphs.
\newblock \emph{Science in China Series A: Mathematics}, 48\penalty0
  (4):\penalty0 548--562, Apr 2005.

\end{thebibliography}

\end{document}